\DeclareMathOperator*{\argmax}{arg\,max}
\newtheorem{corollary}{Corollary}
\newtheorem{prop}{Proposition}
\newtheorem{definition}{Definition}
\newtheorem{rem}{Remark}
\newcommand{\piflat}{\pi^{(\text{flat})}}
\newcommand{\piprop}{\pi^{(\text{prop})}}
\newcommand{\pireal}{\pi^{(\text{real})}}
\newcommand{\wbird}{w^{(\text{bird})}}
\newcommand{\wspec}{w^{(\text{spec})}}
\newcommand{\wrare}{w^{(\text{rare})}}
\begin{document}
\newcommand{\ncm}{\newcommand}
\ncm{\bfm}[1]{\mbox{\boldmath $#1$}}
\ncm{\sbfm}[1]{\mbox{\scriptsize\boldmath $#1$}}
\ncm{\scr}[1]{\mbox{\scriptsize #1}}
\ncm{\scrmath}[1]{\mbox{\scriptsize $#1$}}
\ncm{\bfmscr}[1]{\mbox{\scriptsize{\boldmath $#1$}}}

\ncm{\R}{{\mathbb{R}}}
\ncm{\Z}{{\mathbb{Z}}}
\ncm{\T}{{\mathbb{T}}}
\ncm{\Smath}{{\mathbb{S}}}
\ncm{\N}{{\mathbb{N}}}
\ncm{\C}{{\mathbb{C}}}
\ncm{\A}{{\mathbb{A}}}
\ncm{\amath}{\bfm{a}}
\ncm{\V}{{\mathbb{V}}}
\ncm{\Hap}{{\mathbb{H}}}
\ncm{\MMD}{{\mathbb{MD}}}
\ncm{\E}{\mathbb{E}}
\ncm{\Prob}{\mathbb{P}}

\ncm{\sfS}{\mathsf{S}}

\ncm{\cA}{{\cal A}}
\ncm{\cB}{{\cal B}}
\ncm{\cC}{{\cal C}}
\ncm{\calF}{{\cal F}}
\ncm{\cD}{{\cal D}}
\ncm{\cG}{{\cal G}}
\ncm{\cL}{{\cal L}}
\ncm{\cN}{{\cal N}}
\ncm{\cI}{{\cal I}}
\ncm{\cJ}{{\cal J}}
\ncm{\cH}{{\cal H}}
\ncm{\cV}{{\cal V}}
\ncm{\cW}{{\cal W}}
\ncm{\cT}{{\cal T}}
\ncm{\cX}{{\cal X}}
\ncm{\cQ}{{\cal Q}}
\ncm{\cR}{{\cal R}}
\ncm{\cS}{{\cal S}}
\ncm{\cM}{{\cal M}}
\ncm{\cU}{{\cal U}}
\ncm{\cP}{{\cal P}}
\ncm{\cZ}{{\cal Z}}
\ncm{\cO}{{\cal O}}
\ncm{\cPzer}{{\cal P}_0}
\ncm{\cPone}{{\cal P}_1}
\ncm{\cPk}{{\cP_{\mbox{\scr{known}}}}}
\ncm{\cF}{{\cal F}}
\ncm{\cE}{{\cal E}}
\ncm{\cMD}{{\cal MD}}
\ncm{\tcV}{\tilde{\cal V}}
\ncm{\cCobs}{{\cal C}_{\scr{obs}}}

\ncm{\Om}{\Omega}
\ncm{\om}{\omega}
\ncm{\va}{\varepsilon}
\ncm{\vam}{\varepsilon_{\scr{max}}}
\ncm{\de}{\delta}
\ncm{\De}{\Delta}
\ncm{\ga}{\gamma}
\ncm{\Ga}{\Gamma}
\ncm{\la}{\lambda}
\ncm{\ka}{\kappa}
\ncm{\si}{\sigma}
\ncm{\Si}{\Sigma}
\ncm{\La}{\Lambda}
\ncm{\eps}{\epsilon}

\ncm{\bY}{\bfm{Y}}
\ncm{\bA}{\bfm{A}}
\ncm{\bB}{\bfm{B}}
\ncm{\bC}{\bfm{C}}
\ncm{\bD}{\bfm{D}}
\ncm{\bF}{\bfm{F}}
\ncm{\bI}{\bfm{I}}
\ncm{\bZ}{\bfm{Z}}
\ncm{\bG}{\bfm{G}}
\ncm{\bH}{\bfm{H}}
\ncm{\bL}{\bfm{L}}
\ncm{\bP}{\bfm{P}}
\ncm{\bQ}{\bfm{Q}}
\ncm{\bS}{\bfm{S}}
\ncm{\bT}{\bfm{T}}
\ncm{\bU}{\bfm{U}}
\ncm{\bM}{\bfm{M}}
\ncm{\bN}{\bfm{N}}
\ncm{\bR}{\bfm{R}}
\ncm{\bW}{\bfm{W}}
\ncm{\bX}{\bfm{X}}
\ncm{\bu}{\bfm{u}}
\ncm{\bv}{\bfm{v}}
\ncm{\bw}{\bfm{w}}
\ncm{\bwpr}{\bfm{w}^\prime}
\ncm{\bhp}{\bfm{h}^\prime}
\ncm{\bc}{\bfm{c}}
\ncm{\bd}{\bfm{d}}
\ncm{\bh}{\bfm{h}}
\ncm{\bm}{\bfm{m}}
\ncm{\bn}{\bfm{n}}
\ncm{\bb}{\bfm{b}}
\ncm{\bg}{\bfm{g}}
\ncm{\be}{\bfm{e}}
\ncm{\bl}{\bfm{l}}
\ncm{\bp}{\bfm{p}}
\ncm{\bq}{\bfm{q}}
\ncm{\br}{\bfm{r}}
\ncm{\bs}{\bfm{s}}
\ncm{\bx}{\bfm{x}}
\ncm{\by}{\bfm{y}}
\ncm{\bz}{\bfm{z}}
\ncm{\balp}{\bfm{\alpha}}
\ncm{\bbe}{\bfm{\beta}}
\ncm{\bxi}{\bfm{\xi}}
\ncm{\bth}{\bfm{\theta}}
\ncm{\bom}{\bfm{\om}}
\ncm{\bmu}{\bfm{\mu}}
\ncm{\bde}{\bfm{\de}}
\ncm{\bva}{\bfm{\va}}
\ncm{\beps}{\bfm{\eps}}
\ncm{\bga}{\bfm{\ga}}
\ncm{\bka}{\bfm{\ka}}
\ncm{\bla}{\bfm{\la}}
\ncm{\bpi}{\bfm{\pi}}
\ncm{\brho}{\bfm{\rho}}
\ncm{\boldeta}{\bfm{\eta}}
\ncm{\bphi}{\bfm{\phi}}
\ncm{\bLa}{\bfm{\Lambda}}
\ncm{\bPi}{\bfm{\Pi}}
\ncm{\bSi}{\bfm{\Si}}
\ncm{\bone}{\bfm{1}}

\ncm{\sbb}{\sbfm{b}}
\ncm{\sbc}{\sbfm{c}}
\ncm{\sbd}{\sbfm{d}}
\ncm{\sbm}{\sbfm{m}}
\ncm{\sbn}{\sbfm{n}}
\ncm{\sbC}{\sbfm{C}}
\ncm{\sbM}{\sbfm{M}}
\ncm{\sbX}{\sbfm{X}}
\ncm{\sbw}{\sbfm{w}}
\ncm{\sbx}{\sbfm{x}}
\ncm{\sby}{\sbfm{y}}
\ncm{\subu}{\scrmath{\ubu}}
\ncm{\subv}{\scrmath{\ubv}}
\ncm{\subw}{\scrmath{\ubw}}
\ncm{\subx}{\scrmath{\ubx}}
\ncm{\suby}{\scrmath{\uby}}
\ncm{\subX}{\scrmath{\ubX}}

\ncm{\hbe}{\hat{\beta}}
\ncm{\heta}{\hat{\eta}}
\ncm{\hth}{\hat{\theta}}
\ncm{\hbth}{\hat{\bth}}
\ncm{\hs}{\hat{s}}
\ncm{\hN}{\hat{N}}
\ncm{\hF}{\hat{F}}
\ncm{\hI}{\hat{I}}
\ncm{\hP}{\hat{P}}
\ncm{\hQ}{\hat{Q}}
\ncm{\htau}{\hat{\tau}}
\ncm{\hla}{\hat{\lambda}}
\ncm{\hmu}{\hat{\mu}}
\ncm{\hpi}{\hat{\pi}}

\ncm{\mast}{m^\ast}
\ncm{\cast}{c^\ast}
\ncm{\fast}{f^\ast}
\ncm{\siast}{\si^\ast}
\ncm{\psiast}{\psi^\ast}
\ncm{\tsiast}{\tilde{\si}^\ast}
\ncm{\alfast}{\alpha^\ast}
\ncm{\tkaast}{\tilde{\kappa}^\ast}
\ncm{\Xast}{X^\ast}
\ncm{\Yast}{Y^\ast}

\ncm{\ap}{a^\prime}
\ncm{\hp}{h^\prime}
\ncm{\ip}{i^\prime}
\ncm{\jp}{j^\prime}
\ncm{\kp}{k^\prime}
\ncm{\lp}{l^\prime}
\ncm{\np}{n^\prime}
\ncm{\npr}{n^\prime}
\ncm{\qp}{q^\prime}
\ncm{\rp}{r^\prime}
\ncm{\spr}{s^\prime}
\ncm{\up}{u^\prime}
\ncm{\vp}{v^\prime}
\ncm{\wpr}{w^\prime}
\ncm{\xp}{x^\prime}
\ncm{\yp}{y^\prime}
\ncm{\zp}{z^\prime}
\ncm{\Cp}{C^\prime}
\ncm{\Gp}{G^\prime}
\ncm{\Ip}{I^\prime}
\ncm{\Mp}{M^\prime}
\ncm{\Np}{N^\prime}
\ncm{\Npr}{N^\prime}
\ncm{\Tp}{T^\prime}
\ncm{\gap}{\ga^\prime}
\ncm{\phpr}{\phi^\prime}

\ncm{\wbis}{w^{\prime\prime}}

\ncm{\tih}{\tilde{h}}
\ncm{\tZ}{\tilde{Z}}
\ncm{\tA}{\tilde{A}}
\ncm{\tC}{\tilde{C}}
\ncm{\tD}{\tilde{D}}
\ncm{\tF}{\tilde{F}}
\ncm{\tI}{\tilde{I}}
\ncm{\tN}{\tilde{N}}
\ncm{\tY}{\tilde{Y}}
\ncm{\tmu}{\tilde{\mu}}
\ncm{\tOm}{\tilde{\Omega}}
\ncm{\tnu}{\tilde{\nu}}
\ncm{\tsi}{\tilde{\sigma}}
\ncm{\tal}{\tilde{\alpha}}
\ncm{\tbeta}{\tilde{\beta}}
\ncm{\tde}{\tilde{\delta}}
\ncm{\tka}{\tilde{\ka}}
\ncm{\txi}{\tilde{\xi}}
\ncm{\tmathV}{\tilde{\V}}
\ncm{\tV}{\tilde{V}}
\ncm{\tr}{\tilde{r}}
\ncm{\tu}{\tilde{u}}
\ncm{\tw}{\tilde{w}}
\ncm{\twpr}{\tilde{w}^\prime}
\ncm{\tb}{\tilde{b}}
\ncm{\td}{\tilde{d}}
\ncm{\tp}{\tilde{p}}
\ncm{\tf}{\tilde{f}}
\ncm{\tn}{\tilde{n}}
\ncm{\tR}{\tilde{R}}
\ncm{\tS}{\tilde{S}}
\ncm{\tL}{\tilde{L}}
\ncm{\tl}{\tilde{l}}
\ncm{\tP}{\tilde{P}}
\ncm{\tSmath}{\tilde{\mathbb{S}}}
\ncm{\tT}{\tilde{T}}
\ncm{\tK}{\tilde{K}}
\ncm{\tva}{\tilde{\va}}
\ncm{\tla}{\tilde{\la}}
\ncm{\tpi}{\tilde{\pi}}
\ncm{\trho}{\tilde{\rho}}
\ncm{\tbom}{\tilde{\bfm{\om}}}
\ncm{\tbxi}{\tilde{\bfm{\xi}}}
\ncm{\tbrho}{\tilde{\bfm{\rho}}}
\ncm{\tbg}{\tilde{\bg}}
\ncm{\tbb}{\tilde{\bb}}
\ncm{\tbr}{\tilde{\br}}
\ncm{\tbf}{\tilde{\bfm{f}}}
\ncm{\tbD}{\tilde{\bfm{D}}}
\ncm{\tbH}{\tilde{\bfm{H}}}
\ncm{\tbone}{\tilde{\bfm{1}}}
\ncm{\tbe}{\tilde{\bfm{e}}}

\ncm{\baW}{\bar{W}}
\ncm{\bacW}{\bar{\cW}}
\ncm{\bacV}{\bar{\cV}}
\ncm{\baf}{\bar{f}}
\ncm{\bah}{\bar{h}}
\ncm{\ban}{\bar{n}}
\ncm{\bap}{\bar{p}}
\ncm{\bav}{\bar{v}}
\ncm{\baw}{\bar{w}}
\ncm{\baZ}{\bar{Z}}
\ncm{\baY}{\bar{Y}}
\ncm{\baS}{\bar{S}}
\ncm{\baH}{\bar{H}}
\ncm{\baA}{\bar{A}}
\ncm{\baD}{\bar{D}}
\ncm{\baC}{\bar{C}}
\ncm{\baN}{\bar{N}}
\ncm{\baQ}{\bar{Q}}
\ncm{\bal}{\bar{l}}
\ncm{\bam}{\bar{m}}
\ncm{\bae}{\bar{e}}
\ncm{\bacR}{\bar{{\cal R}}}
\ncm{\bacP}{\bar{{\cal P}}}
\ncm{\babe}{\bar{\beta}}
\ncm{\baka}{\bar{\kappa}}
\ncm{\bamu}{\bar{\mu}}
\ncm{\banu}{\bar{\nu}}
\ncm{\bade}{\bar{\de}}
\ncm{\bala}{\bar{\la}}
\ncm{\baga}{\bar{\ga}}
\ncm{\barho}{\bar{\rho}}
\ncm{\babf}{\bar{\bfm{f}}}
\ncm{\babD}{\bar{\bfm{D}}}
\ncm{\babA}{\bar{\bfm{A}}}
\ncm{\babQ}{\bar{\bfm{Q}}}
\ncm{\babW}{\bar{\bfm{W}}}
\ncm{\babh}{\bar{\bfm{h}}}
\ncm{\babr}{\bar{\bfm{r}}}
\ncm{\babde}{\bar{\bfm{\de}}}
\ncm{\babrho}{\bar{\bfm{\rho}}}
\ncm{\babone}{\bar{\bfm{1}}}

\ncm{\chnu}{\check{\nu}}

\ncm{\uC}{\underline{C}}
\ncm{\ucH}{\underline{\cH}}
\ncm{\ucX}{\underline{\cX}}
\ncm{\ubx}{\underline{\bx}}
\ncm{\ubu}{\underline{\bu}}
\ncm{\ubv}{\underline{\bv}}
\ncm{\ubw}{\underline{\bw}}
\ncm{\ubX}{\underline{\bX}}
\ncm{\uby}{\underline{\by}}
\ncm{\ubY}{\underline{\bY}}

\ncm{\ocH}{\overline{\cH}}

\ncm{\sca}{\scr{a}}
\ncm{\scn}{\scr{n}}

\ncm{\Lin}{\, \stackrel{\cal L} \in}
\ncm{\Leq}{\, \stackrel{\cal L} =}
\ncm{\Lto}{\, \stackrel{\cal L} \longrightarrow}
\ncm{\pto}{\, \stackrel{p} \longrightarrow}
\ncm{\asto}{\, \stackrel{\rm a.s.} \longrightarrow}
\ncm{\Cov}{\mbox{Cov}}
\ncm{\Var}{\mbox{Var}}
\ncm{\sameord}{\stackrel{\cup}{{\scriptstyle \cap}}}

\ncm{\ith}{i^{\scr{th}}}
\ncm{\jth}{j^{\scr{th}}}
\ncm{\kth}{k^{\scr{th}}}
\ncm{\lth}{l^{\scr{th}}}
\ncm{\Bin}{\mbox{Bin}}
\ncm{\CV}{\mbox{CV}}
\ncm{\Exp}{\mbox{Exp}}
\ncm{\Hyp}{\mbox{Hyp}}
\ncm{\Po}{\mbox{Po}}
\ncm{\mm}{\mbox{mm}}
\ncm{\PD}{\mbox{PD}}
\ncm{\Ctot}{\bar{C}}
\ncm{\Ctottiny}{C_{\mbox{\tiny tot}}}
\ncm{\bzero}{\bfm{0}}
\ncm{\fappr}{\hat{f}}
\ncm{\bappr}{\hat{b}}
\ncm{\laappr}{\hat{\la}}
\ncm{\muappr}{\hat{\mu}}
\ncm{\pappr}{\hat{p}}
\ncm{\piappr}{\hat{\pi}}
\ncm{\kaappr}{\hat{\ka}}
\ncm{\Siappr}{\hat{\Si}}
\ncm{\bSiappr}{\hat{\bSi}}
\ncm{\demax}{\de_{\scr{max}}}
\ncm{\kamax}{\ka_{\scr{max}}}
\ncm{\mumin}{\mu_{\scr{min}}}
\ncm{\hmumin}{\hmu_{\scr{min}}}
\ncm{\Ias}{I_{\scr{as}}}
\ncm{\Ibas}{I_B}
\ncm{\cBall}{\cB_{\scr{all}}}
\ncm{\Inonas}{I_{\scr{nas}}}
\ncm{\Ilong}{I_{\scr{long}}}
\ncm{\Ishort}{I_{\scr{short}}}
\ncm{\cCcoarse}{\cC_{\scr{red}}}
\ncm{\cCmis}{\cC_{\scr{mism}}}
\ncm{\cCmishit}{\cC_{\scr{mismhit}}}
\ncm{\Cmax}{C_{\scr{max}}}
\ncm{\fmax}{f_{\scr{max}}}
\ncm{\Imin}{I_{\scr{min}}}
\ncm{\pmin}{p_{\scr{min}}}
\ncm{\Pmin}{P_{\scr{min}}}
\ncm{\thmin}{\theta_{\scr{min}}}

\ncm{\beq}{\begin{equation}}
\ncm{\eeq}{\end{equation}}
\ncm{\beqr}{\begin{eqnarray}}
\ncm{\eeqr}{\end{eqnarray}}
\ncm{\beqrn}{\begin{eqnarray*}}
\ncm{\eeqrn}{\end{eqnarray*}}
\ncm\rthm[1]{\ref{#1}}
\ncm\lb[1]{\label{#1}}
\ncm\re[1]{(\ref{#1})}
\ncm{\slut}{
  {\unskip\nobreak\hfill\penalty100\hskip1em\vadjust{}\nobreak
  \hfill\mbox{$\Box$}\parfillskip=0pt\finalhyphendemerits=0}}

\parindent=0mm
\newcommand*\samethanks[1][\value{footnote}]{\footnotemark[#1]}

\begin{titlepage}

\title{Classification Under Partial Reject Options}

\author{M{\aa}ns Karlsson\footnote{Department of Mathematics, Stockholm University, 106 91 Stockholm, Sweden. Email: mansk@math.su.se.} \and Ola H\"{o}ssjer\footnote{Department of Mathematics, Stockholm University, 106 91 Stockholm, Sweden. Email: ola@math.su.se.}}

\maketitle

\begin{abstract}
\parindent=0pt
We study set-valued classification for a Bayesian model where data originates from one of a finite number $N$ of possible hypotheses. Thus we consider the scenario where the size of the class\-ified set of categories ranges from 0 to $N$. Empty sets corresponds to an outlier, size 1 represents a firm decision that singles out one hypotheses, size $N$ corresponds to a rejection to classify, where\-as sizes $2\ldots,N-1$ repre\-sent a partial rejection, where some hypotheses are excluded from further analysis. We introduce a general framework of reward functions with a set-valued argument and derive the correspon\-ding optimal Bayes classifiers, for a homogeneous block of hypotheses and for when hypo\-theses are partitioned into blocks, where ambiguity within and between blocks are of different severity. We illustrate classification using an ornitho\-logical dataset, with taxa partitioned into blocks and parame\-ters estimated using MCMC. The associa\-ted reward function's tuning para\-me\-ters are chosen through cross-validation.


\par\bigskip
{{\bf Keywords:} }  Blockwise cross-validation, Bayesian classification, con\-formal prediction, classes of hypotheses, indifference zones, Markov Chain Monte Carlo, reward functions with set-valued inputs, set-val\-ued classi\-fiers.
\end{abstract}

\end{titlepage}

\section{Introduction}

Classification of observations among a finite number $N$ of hypotheses or categories is a well studied problem in statistics, and also a central concept of modern machine learning \citep{bishop2006pattern, hastie2009elements}. With a Bayesian approach the maximum aposteriori classifier maximizes the proba\-bili\-ty of a correct classification \citep{berger2013statistical, bishop2006pattern}. For some problems, when there is much ambi\-gui\-ty about the category that fits data the best, it is possible to add a reject option, where no classification is made \citep{chow1970optimum, ripley2007pattern, freund2004generalization, herbei2006classification}, and if such a reject option is followed by additional data collection, this leads to Bayesian sequential analysis \citep{arrow1949bayes}. These types of classifiers were generalized in \citet{karlsson2021identification}, and formulated in the context of reward functions with a set-valued argument. Each such reward function leads to a set-valued classifier, where the size of the classified set of categories is 0 if the classifier rejects all hypotheses, 1 for a firm decision that singles out one category, between 1 and $N$ for a partial reject option, where ambiguity remains between some but not all categories, and $N$ for a rejection to classify, i.e when none of the categories are singled out.
\par\medskip
In this paper we analyze reward functions with a set-valued argument in more detail, and in particular we derive the associated optimal Bayes classifiers that maximize expected rewards. We start by considering reward functions for a homogeneous collection of categories of the same type, and find explicit formulas for the Bayes classifier in terms of the ordered aposteriori probabili\-ties of all hypotheses and a penalty term for the size of the classified set. It is shown that conformal prediction \citep{vovk2005algorithmic, shafer2008tutorial} 
with posterior probabilities as nonconformity measure is a special case of our approach. 
Then we consider scenarios where the categories are divided into a number of blocks, such that categories (typically) are more similar within than between blocks. The associated Bayes classifiers involve the ordered aposteriori pro\-babilities within each block and penalty terms $a$ and $b$ for adding classified categories from the correct and wrong blocks respectively. In particular, we demon\-strate that classifi\-cation with indifference zones \citep{bechhofer1954single, goldsman1986tutorial} can be represen\-ted as an instance of set-valued classification with two blocks of categories. 
\par\medskip
Our framework of set-valued classification, with the set of categories parti\-tioned into blocks, is applied to an ornithological data set. Each observation in data consists of three observed traits for a bird and which taxon the bird belongs to. Based on these data we want to train a classifier of future birds to taxon. The taxa are partitioned into blocks with regard to cross-taxon similarities. The parameters of the underlying statistical model are estimated through a Markov Chain Monte Carlo procedure, and the tuning parameters of the set-valued reward function are estimated through cross validation.
\par\medskip
The article is organized as follows: In Section \ref{Sec:ClassSetRew} we introduce the statistical model with $N$ hypotheses, and define the optimal (Bayes) set-valued class-ifier. Then we introduce a large class of reward functions for models with one block of categories (Section \ref{Sec:One}) and several blocks of categories (Section \ref{Sec:Several}) respectively, and give explicit expressions for the corresponding Bayes classi\-fiers. The ornithological dataset is analyzed in Section \ref{Sec:Data} and a discussion in Section \ref{Sec:Disc} concludes the paper.

\section{Statistical model and optimal classifiers}\lb{Sec:ClassSetRew}

Consider a random variable $Z\in\cZ$, whose distribution follows one of $N$ possible hypotheses (or categories)
\beq
H_i: Z\sim f_i, \quad i=1,\ldots,N,
\lb{Hi}
\eeq
where $f_i$ is the density or probability function of $Z$ under $H_i$. We will assume a Bayesian framework, and thus the true but unknown hypothesis $I\in\cN=\{1,\ldots,N\}$ is a random variable. It is given a categorical prior distribution with parameter vector $\pi_i=P(I=i)$, for $i=1,\ldots,N$ and the corresponding parameter vector of the categorical posterior distribution of an observed value $z$ of $Z$ is
\beq
p_i = p_i(z) = \Prob(I=i|Z=z) = \frac{\pi_i f_i(z)}{\sum_{j=1}^N \pi_j f_j(z)}.
\lb{pi}
\eeq
Our objective is to classify $z$. To this end, a classifier $\hI=\hI(z)\subset \cN$ with partial reject options is defined as a subset of categories. When $\hI=\{i\}$, a firm decision of category $i$ is made, whereas $\hI=\cN$ corresponds to a reject option, where no decision is made about which categories that conform with $z$ the most. Note that it is the action ``to classify'' that is rejected, when $\hI = \cN$. The intermediate case $2\le |\hI| =m\le N-1$ corresponds to a partial reject option, where the classifier excludes $N-m$ hypotheses but rejects discrimination among the remaining $m=m(z)$ categories. Another possibility, $\hI=\emptyset$, corresponds to a scenario where none of the $N$ categories fit the observed data $z$ well, i.e. we exclude all hypotheses. This can be regarded as a safeguard against outliers or otherwise faulty data \citep{ripley2007pattern, karlsson2021identification}.
\par\medskip
Let 
\beq
p_{(1)}\le \ldots \le p_{(N)}
\lb{pOrdered}
\eeq
refer to the ordered posterior category probabilities. The well-known Max\-imum Aposteriori (MAP) classifier \citep{berger2013statistical}
\beq
\hI = \hI(z) = \{(N)\}
\lb{hIMAP}
\eeq
always makes a firm decision, so that 
\beq
\Prob(|\hI|=1)=1.
\lb{hISizeOne}
\eeq
It also maximizes the probability $\Prob(\hI=\{i\})$ of a correct classification, but has a higher probability of misclassification than correct classification, when $p_{(N)} < 1/2$. The MAP classifier can be formulated as $\hI$ being the classifier that maximizes the expected value 
\beq
 \bar{V} =\E[R(\hI,I)]
\lb{V}
\eeq
of the reward function
\beq
R(\cI,i) = \left\{\begin{array}{ll}
1, & \cI = \{i\},\\
0, & \cI \ne \{i\}
\end{array}\right.     
\lb{RMAP}
\eeq
assigned to a classified subset $\cI\subset\cN$ of categories when the true category is $i$. Since $\hI=\hI(Z)$, the expectation in \re{V} is with respect to $Z$ and $I$. Let the value function 
$$
V(z;\cI)=\E\left[R(\cI,I)\mid Z=z\right]
$$ 
refer to the conditional expected reward given $Z=z$. It is clear the optimal Bayes classifier, that maximizes the expected reward, or equivalently max\-imizes the expected value function $\bar{V}  = \E[V(Z;\hat{I}(Z))]$ is obtained as
\begin{align}
 \hI(z) &= \argmax_{\cI \subset \cN} V(z;\cI) \nonumber \\
 &= \argmax_{\cI \subset \cN} \E\left[R(\cI,I)\mid Z=z\right] \nonumber \\
 &= \argmax_{\cI \subset \cN} \sum_{i=1}^N R(\cI,i) p_i(z)
 \lb{Vz}
\end{align}
for each $z\in\cZ$. Since the reward function \eqref{RMAP} only takes non-zero values for singelton choices of $\cI\subset\cN$, we only need to consider $\cI = \{i\}$ for $i=1,\ldots,N$, in order to find $\hI$. For each choice $\cI = \{i\}$, $V(z;\{i\}) = p_i$. Thus we choose $i = (N)$, since $p_{(N)}$ for each fixed $z$ is the largest value $V(z,\cI)$ can return. This shows that the MAP classifier \re{hIMAP} is the optimal classifier under \eqref{RMAP}.
\par\medskip
In this paper we will consider optimal classifiers \eqref{Vz}
for reward functions other than \re{RMAP}; those that allow not only for single classified categories, but also for outliers and (partial) reject options.  

\section{Partial rejection, one block of categories}\lb{Sec:One}

When $\cN$ represents one homogeneous block of categories, the following class of reward functions is natural to use: 
\par\medskip
\begin{definition} \label{def1}
Given a set of categories $\cN=\{1,\ldots,N\}$, with $i \in \cN$ the true category of an observation and $\cI \subset \cN$ the classified subset of categories, a reward function
 \beq
 R(\cI,i) = R(\tau(\cI),\tau(i))
 \lb{RInv}
 \eeq
 invariant w.r.t. permutations $\tau:\cN\to\cN$ of the labels is called an \emph{invariant reward function}.
\end{definition}
It is easily seen that
\beq
R(\cI,i) = 1(i\in\cI) - g(|\cI|)
\lb{Rg}
\eeq
is an invariant reward function. The first term of \re{Rg} corresponds to a reward of 1 for a classified set $\cI$ that includes the true category $i$, whereas the second term $g(|\cI|)\ge 0$ penalizes based on the size $|\cI|$ of the classified set. Of particular interest are the following classifiers:
\begin{definition}
A classifier containing the $m \in \{0,\ldots,N\}$ categories with the largest post\-erior probabilities, where
\beq
\hI_{m} = \begin{cases}
                       \{(N+1-m),\ldots,(N)\}, &\quad m \ge 1, \\
                       \emptyset, &\quad m=0,
                      \end{cases}
\lb{hInz}
\eeq
is called an \emph{$m$ most probable classifier}, abbreviated as an $m$-MP classifier. 
\end{definition}
The MAP classifier is thus a special case of the more general class of $m$-MP classifiers, and obviously the 1-MP classifier is the MAP classifier, whereas the $0$-MP classifier corresponds to the empty set, since we then choose none of the categories. We will now link invariant reward functions of type \re{Rg} to $m$-MP classifiers: 
\par\medskip
\begin{prop}\lb{Prop:One} The optimal classifier for an invariant reward function of type \re{Rg}, is an $m$-MP classifier $\hI(z) = \hI_{m(z)}$ with 
\beq
m(z) = \argmax_{0\le m \le N} \left[v(m;z) - g(m)\right].
\lb{nz}
\eeq
where $v(0;z)=0$ and 
\beq
v(m;z) = \sum_{j=1}^m p_{(N+1-j)}, \quad m=1,\ldots,N.
\lb{vnz}
\eeq
\end{prop}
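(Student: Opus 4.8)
The plan is to reduce the combinatorial maximization over all subsets $\cI\subset\cN$ in \re{Vz} to a one-dimensional maximization over the size $m=|\cI|$. First I would substitute the invariant reward function \re{Rg} into the value function and simplify. Writing $R(\cI,i)=1(i\in\cI)-g(|\cI|)$ gives
\begin{align*}
V(z;\cI) &= \sum_{i=1}^N \left[1(i\in\cI) - g(|\cI|)\right] p_i(z) \\
&= \sum_{i\in\cI} p_i(z) - g(|\cI|)\sum_{i=1}^N p_i(z) \\
&= \sum_{i\in\cI} p_i(z) - g(|\cI|),
\end{align*}
where the last equality uses $\sum_{i=1}^N p_i(z)=1$, since the $p_i(z)$ are posterior probabilities from \re{pi}. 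Because the optimal Bayes classifier maximizes $V(z;\cI)$ pointwise in $z$ by \re{Vz}, it suffices to solve this maximization for each fixed $z$.

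The key step is then to split the maximization into an inner and an outer problem. For a fixed size $|\cI|=m$ the penalty $g(m)$ is constant, so maximizing $V(z;\cI)$ over all size-$m$ subsets is equivalent to maximizing $\sum_{i\in\cI}p_i(z)$. A sum of $m$ of the posterior probabilities is largest precisely when those are the $m$ largest probabilities, so the maximizing subset of size $m$ is $\hI_m$ from \re{hInz}, and the attained inner maximum equals $v(m;z)-g(m)$, with $v(m;z)=\sum_{j=1}^m p_{(N+1-j)}$ the sum of the $m$ largest ordered probabilities from \re{pOrdered} (the case $m=0$ giving $v(0;z)=0$, matching the empty sum over $\cI=\emptyset$).

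Finally I would optimize over the size. Since the best subset of each given size is an $m$-MP classifier, the global maximizer of $V(z;\cdot)$ must itself be an $m$-MP classifier $\hI_{m(z)}$, where $m(z)$ is the size attaining the largest inner value, namely $m(z)=\argmax_{0\le m\le N}[v(m;z)-g(m)]$ as in \re{nz}. This establishes the claim.

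The step I expect to require the most care is the inner optimization: asserting rigorously that the maximum of $\sum_{i\in\cI}p_i(z)$ over size-$m$ subsets is the sum of the $m$ largest posterior probabilities, and checking that ties among equal posterior probabilities do not affect the value $v(m;z)$ — only which representatives are selected, so the maximizing subset need not be unique while the optimal value is well defined. The whole reduction otherwise rests on the normalization $\sum_i p_i(z)=1$, which collapses the size penalty into an additive constant for each candidate size $m$.
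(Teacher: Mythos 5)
Your proposal is correct and follows essentially the same route as the paper's proof: it substitutes \re{Rg} into the value function, uses the normalization $\sum_{i=1}^N p_i(z)=1$ to collapse the penalty to an additive $g(|\cI|)$, bounds the inner sum over size-$m$ subsets by the sum of the $m$ largest ordered posteriors so that $\hI_m$ attains $v(m;z)-g(m)$, and then maximizes over $m$. Your explicit remarks on the empty-set case and on ties (the maximizer need not be unique but the optimal value is) are a small additional touch of rigor beyond the paper's argument.
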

\begin{proof}
Recall that the optimal classifier $\hI(z)$ maximizes, for each $z\in\cZ$, the value function $V(z;\cI)$ among all nonempty subsets $\cI$ of $\cN$. For the reward function of equation \re{Rg} we have that 
\begin{align}
V(z;\cI) = \E\left[ R(\cI,I) \mid Z=z\right] &= \sum_{i=1}^N \left(1(i\in\cI) - g(|\cI|)\right)p_i(z) \nonumber\\ 
&= \sum_{i=1}^N 1(i\in\cI)p_i(z) - \sum_{i=1}^N g(|\cI|)p_i(z) \nonumber\\
&= \sum_{i\in\cI} p_i - g(|\cI|)\nonumber\\
&\le  \sum_{j=1}^{|\cI|} p_{(N+1-j)} - g(|\cI|)\nonumber\\
&= v(|\cI|;z) - g(|\cI|).
\lb{VzProof}
\end{align}
Note that the inequality occurs since we go from considering a subset $\cI \subseteq \cN$ of categories to considering another subset  with equally many but the most probable categories. Consequently, among all $\cI\subset\cN$ of size $|\cI|=m$, the value function $V(z;\cI)$ is maximized by $\hI_m$ in \re{hInz}, for some $m\in\{0,\ldots,N\}$. Among these subsets, the optimal classifier is $\hI=\hI_{m(z)}$, where $m(z)$ is the value of $|\cI|$ that maximizes the right hand side of \re{VzProof}. Since this value of $m(z)$ is identical to the one in \re{nz}, this finishes the proof.
\end{proof}
\par\medskip
{\bf Example 1} (Classification with reject options.) \citet{ripley2007pattern} introduced a reward function with a reject option. In our notation it corre\-sponds to 
\beq
R(\cI,i) = \left\{\begin{array}{ll}
1(i=j), & \cI=\{j\},\\
r, & \cI = \cN,\\
0, & |\cI|\in \{0,2,3,\ldots,N-1\},
\end{array}\right.
\lb{RRipley}
\eeq
with a reward of $1/N < r < 1$ assigned to the reject option. The special case of \re{RRipley} for $N=2$ categories was treated by \citet{chow1970optimum, herbei2006classification}. Note that \re{RRipley} is equivalent to a reward function \re{Rg} with penalty term
\beq
g(m) = \left\{\begin{array}{ll}
0, & m=1,\\
1, & m =2,\ldots,N-1,\\
1-r, & m=N. 
\end{array}\right. 
\lb{gRipley}
\eeq 
The corresponding classifier
\beq
\hI = \left\{
\begin{array}{ll}
(N), & p_{(N)} > r,\\
\cN, & p_{(N)} \le r
\end{array}\right.
\lb{hIRipley}
\eeq
is of interest for Bayesian sequential analysis \citep{arrow1949bayes, berger2013statistical} and sequential clinical trials \citep{carlin1998approaches}, where the reject option $\hI=\cN$ corresponds to delaying the decision and collecting more data before the seqential procedure is stopped and a specific category is chosen. 
\hfill\slut
\par\medskip
The next result is a corollary of Proposition \ref{Prop:One}, and it treats an important class of reward functions with a convex penalty function: 
\par\medskip
\begin{corollary}\lb{Cor:Conv}
Consider a classifier $\hI(z)$ based on a reward function \re{Rg}, for which the penalty term $g(m)$ is a convex function of $m$, with $g(0)=0$. Then \re{nz} simplifies to $\hI(z)=I_{m(z)}$, with 
\beq
m(z) = \max\{m^\prime(z),m^{\prime\prime}(z)\},
\lb{nzLin}
\eeq
where 
\beq
\begin{array}{rcl}
m^\prime(z) &=& 0,\\
m^{\prime\prime}(z) &=& \max\{1\le m \le N; \, p_{(N+1-m)} \ge g(m)-g(m-1)\},
\end{array}
\lb{nzConv}
\eeq
and $\max \emptyset = -\infty$ in the definition of $m^{\prime\prime}(z)$. 
\end{corollary}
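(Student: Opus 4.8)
The plan is to read the objective $v(m;z)-g(m)$ appearing in \re{nz} as a discretely concave function of the integer $m$, and then to locate its maximizer purely from the signs of its forward differences. First I would set $\phi(m)=v(m;z)-g(m)$ on $\{0,\ldots,N\}$, normalized by $v(0;z)=0$ and $g(0)=0$, and use the telescoping form \re{vnz} to compute the increment $\Delta(m)=\phi(m)-\phi(m-1)=p_{(N+1-m)}-[g(m)-g(m-1)]$ for $m=1,\ldots,N$. This turns the global optimization in \re{nz} into a question about the increasing/decreasing behavior of $\phi$.

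The central observation is that $\Delta(m)$ is nonincreasing in $m$. The first term $p_{(N+1-m)}$ is nonincreasing because a larger $m$ selects a smaller rank in the ascending ordering \re{pOrdered}, while the second term $g(m)-g(m-1)$ is nondecreasing precisely because $g$ is convex. Subtracting a nondecreasing sequence from a nonincreasing one leaves a nonincreasing $\Delta$, which is exactly the statement that $\phi$ is concave on the integer lattice $\{0,\ldots,N\}$.

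Given this discrete concavity, the maximizer is characterized by the sign pattern of $\Delta$: $\phi$ increases (weakly) as long as $\Delta(m)\ge 0$ and decreases (weakly) thereafter. Hence the largest maximizer is the largest $m$ with $\Delta(m)\ge 0$, which is exactly the inequality $p_{(N+1-m)}\ge g(m)-g(m-1)$ defining $m''(z)$ in \re{nzConv}. When no such $m$ exists, i.e. $\Delta(1)<0$ so that $\phi$ is strictly decreasing from the outset, the maximum sits at $m=0$; this is the fallback supplied by $m'(z)=0$ together with the convention $\max\emptyset=-\infty$, so that $m(z)=\max\{m'(z),m''(z)\}$ correctly returns $0$. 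I would finish by verifying that this single expression reproduces the argmax of \re{nz} in both regimes.

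The main obstacle is the bookkeeping around ties: when some increments vanish, the set of maximizers of $\phi$ is a whole interval of consecutive integers, and one must confirm that choosing the \emph{largest} $m$ with $\Delta(m)\ge 0$ is a legitimate element of $\argmax$ consistent with Proposition \ref{Prop:One}, rather than an arbitrary tie-break. This reduces to checking that along any such plateau $\phi$ is constant (since the vanishing increments contribute nothing), so the right endpoint $m''(z)$ attains the common maximal value.
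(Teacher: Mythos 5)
Your proposal is correct and follows essentially the same route as the paper: both establish that $v(m;z)$ is discretely concave (its increments $p_{(N+1-m)}$ are nonincreasing by \re{pOrdered}), that subtracting the convex $g$ preserves concavity, and that the maximizer is then the largest $m$ with nonnegative increment $p_{(N+1-m)} \ge g(m)-g(m-1)$, with $m=0$ as the fallback. Your explicit handling of plateaus when increments vanish is a minor refinement of the tie-breaking that the paper leaves implicit, but it does not constitute a different argument.
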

\begin{rem}
 Note that in the second line of \eqref{nzConv}, the inequality constitutes an inclusion criterion that a category needs to fulfill to be included in the classifier. As can be seen from the left and right hand sides of the inequality, the category with the largest posterior probability not yet included, will be included if its posterior probability is larger than the added penalty $g(m)-g(m-1)$ associated with enlarging the size of the classifier from $m-1$ to $m$.
\end{rem}

\begin{proof}
 In order to prove \re{nzConv}, we first deduce from \re{pOrdered} that $v(m;z)$ is a concave function of $m$ (indeed, the differences $v(m+1;z) - v(m,z)$ are decreasing as $m$ increases and thus $v(m;z)$ is concave). Consequently, if $g(m)$ is a convex function of $m$ it follows that $v(m;z)-g(m)$ is a concave function of $m$, and it is therefore maximized by 
\beq   
m(z) = \max \left\{0, \max_{1\le m \le N}\left\{\, v(m;z)-g(m) \ge v(m-1;z)-g(m-1)\right\}\right\},
\lb{mzProof}
\eeq
where $\max \emptyset = -\infty$ in the inner maximization. By the definition of $v(m;z)$ in \re{vnz}, equation \re{mzProof} is equivalent to the expression for $m(z)$ given in equation \re{nzConv}. 
\end{proof}
\par\medskip
{\bf Example 2} (Proportion-based reward functions.) In order to illustrate Corollary \ref{Cor:Conv} we introduce the following class of reward functions with a penalty germ $g(m) = c\max(0,m-1)$ that only includes one cost parameter $c$:  
\begin{definition}
An invariant reward function of the form
 \beq
 R(\cI,i) = 1(i\in\cI) - c\max(0,|\cI|-1),
 \lb{Rc}
 \eeq
with $c\ge0$, is called a \emph{proportion-based reward function}.
\end{definition}
Note that a proportion-based reward function has a penalty term that is almost linear in $|\cI|$. It is only the maximum operator that prevents us from refering to \re{Rc} as a reward function with a linear penalty term (abbreviated as a linear reward function). We may interpret $c$ as a cost per {\it extra} classified category on top of the first one. For proportion-based reward functions, Corollary \ref{Cor:Conv} simplifies as follows:
\begin{corollary}
A proportion-based reward function \eqref{Rc} gives rise to an $m$-MP classifier \re{hInz} with $m(z)$ as in \re{nzLin} and 
\beq
\begin{array}{rcl}
m^\prime(z) &=& 1,\\
m^{\prime\prime}(z) &=& \max\{2\le m \le N; \, p_{(N+1-m)} \ge c\}.
\end{array}
\lb{nzLin2}
\eeq
\end{corollary}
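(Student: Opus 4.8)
The plan is to obtain this statement as a direct specialization of Corollary \ref{Cor:Conv}, so the first task is to check that the proportion-based penalty $g(m)=c\max(0,m-1)$ meets the two hypotheses of that corollary. I would first verify $g(0)=c\max(0,-1)=0$, and then confirm convexity by noting that $g(m)=0$ for $m\in\{0,1\}$ and $g(m)=c(m-1)$ for $m\ge 1$, so that $g$ is piecewise linear with successive slopes $0$ and $c$; since $c\ge 0$ these slopes are nondecreasing, which gives convexity. With both hypotheses in place, Corollary \ref{Cor:Conv} applies and yields $\hI(z)=\hI_{m(z)}$ with $m(z)=\max\{m^\prime(z),m^{\prime\prime}(z)\}$ as in \re{nzLin}, where $m^{\prime\prime}(z)$ is the quantity defined by the inclusion criterion of \re{nzConv}.

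Next I would evaluate the incremental penalty $g(m)-g(m-1)$ that appears in that inclusion criterion. A short computation gives $g(1)-g(0)=0$ and $g(m)-g(m-1)=c$ for every $m\ge 2$. Substituting these into the condition $p_{(N+1-m)}\ge g(m)-g(m-1)$ shows that the criterion for $m=1$ reads $p_{(N)}\ge 0$, which always holds since posterior probabilities are nonnegative, whereas for $m\ge 2$ it becomes exactly $p_{(N+1-m)}\ge c$. This is the key computational step, and it explains both features of \re{nzLin2}: the most probable category is always admitted at zero incremental cost, and each further category is admitted precisely when its posterior probability clears the threshold $c$.

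The remaining, and slightly delicate, step is to reconcile the indexing in Corollary \ref{Cor:Conv} with that in the statement, since the two share the names $m^\prime(z)$ and $m^{\prime\prime}(z)$ but use different ranges. Writing $\tilde m^{\prime\prime}(z)$ for the quantity produced by \re{nzConv}, the observation above lets me split its defining set as $\{1\}\cup\{2\le m\le N;\,p_{(N+1-m)}\ge c\}$, whence $\tilde m^{\prime\prime}(z)=\max\{1,m^{\prime\prime}(z)\}$ with $m^{\prime\prime}(z)$ as in \re{nzLin2} and the convention $\max\emptyset=-\infty$. Because Corollary \ref{Cor:Conv} supplies $m(z)=\max\{0,\tilde m^{\prime\prime}(z)\}$, I would conclude $m(z)=\max\{0,1,m^{\prime\prime}(z)\}=\max\{m^\prime(z),m^{\prime\prime}(z)\}$ with $m^\prime(z)=1$, which is exactly \re{nzLin} with the constants of \re{nzLin2}. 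I expect the main obstacle to be precisely this bookkeeping: one has to track that the free admission of the top category lifts the effective lower bound on $m(z)$ from $0$ to $1$, so that the trivial $m=1$ term is absorbed into the new $m^\prime(z)$ rather than into the thresholded maximum.
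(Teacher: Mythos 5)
Your proposal is correct and follows the same route as the paper's proof: compute the incremental penalty $g(m)-g(m-1)$ (which is $0$ for $m=1$ and $c$ for $m\ge 2$) and substitute it into Corollary \ref{Cor:Conv}. The paper compresses the final step to ``inserting \re{gDiff} into \re{nzConv}, equation \re{nzLin2} follows,'' so your explicit verification of convexity and $g(0)=0$, and your careful reconciliation of the index ranges (showing the free admission of $m=1$ lifts $m^\prime(z)$ from $0$ to $1$), simply spell out details the paper leaves implicit.
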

\begin{proof}
Since a proportion-based reward function \re{Rc} has penalty term \\$g(m)=c\max(0,m-1)$, it follows that 
\beq
g(m)-g(m-1) = \left\{\begin{array}{ll}
0, & m=1, \\
c, & m=2,\ldots,N.
\end{array}\right.
\lb{gDiff}
\eeq
Inserting \re{gDiff} into \re{nzConv}, equation \re{nzLin2} follows. 
\end{proof}
\par\medskip
Note that the MAP classifier \re{hIMAP} is obtained for $c>p_{(N)}$. For this reason \citet{karlsson2021identification} restricted the cost term of \re{Rc} to a range $0\le c \le p_{(N)}$ and reparametrized it as 
\beq
c = \rho p_{(N)},
\lb{c}
\eeq
with $0\le \rho \le 1$. The $m$-MP classifier \re{hInz},
with $m=m(z)$ as in \re{nzLin} and \re{nzLin2}, then takes the form
\begin{equation}
\hI = \{i; \, p_i \ge \rho p_{(N)}\}.
\label{Irho}
\end{equation}
\hfill\slut
\par\medskip
{\bf Example 3} (Conformal prediction)
Conformal prediction \citep{vovk2005algorithmic, shafer2008tutorial} is a general method for creating a prediction region $\Gamma^\delta=\Gamma^\delta(z)$ for an observation $z$ with a confidence level $(1-\delta)$\%, where $\delta \in (0,1)$ is chosen freely, typically close to 0. For the case of categorical prediction, the conformal algorithm \citep[Section 4.3]{shafer2008tutorial} uses as input the new observation $z$ that we want to classify, a labeled training data set $B$, the number $\delta$, and a non\-conformity measure $A$. Then for each possible label $i\in\cN$ of $z$ a decision is made as to whether $i$ should be included in the prediction region $\Gamma^\delta$ or not.
\par\medskip
It turns out that conformal prediction is as a special case of the classification theory presented this section, when posterior probabilities are used as non\-conformity measure. We choose the penalty term $g(|\cI|) = c|\cI|$ in \eqref{Rg} and find that $\Gamma^\delta=\hI(z) = \hI_{m(z)}$, where
\begin{align}
m(z) &= \max \left\{ 0, \max \left\{ 1 \le m \le N; p_{(N+1-m)}\ge c\right\} \right\}\nonumber \\ &= \left\{i; 1\le i \le N, p_i \ge c\right\},
\label{conpred}
\end{align}
We can use theory from \citet{shafer2008tutorial} to specify how $c=c(\delta)$ in \eqref{conpred} relates to $\delta$. To this end, let $z$ be the observation we want to classify. For each possible label (or category) $i \in \cN$ of $z$, provisionally set $(z,i)$ as a future observation that is part of training data (although in practice $z$ is rather a future observation that we want to classify).  The batch of previous examples $B$ corresponds to a very large training data set (which is called $\cD=\cD_1\cup \ldots \cup \cD_N$ in Section \ref{Sec:Model}), with $n_i \to \infty$ observations from each category $i$, which in the limit makes it possible to know the distributions $f_1,\ldots,f_N$ of data under the various hypotheses. If $n_i/\sum_{j=1}^N n_j \to \pi_i$ when $n_i\to\infty$ for $i=1,\ldots,N$, the asymptotic batch corresponds to a mixture
\begin{equation}
f \sim \sum_{i=1}^N \pi_i f_i
\label{mixture}
\end{equation}
of the distributions of the $N$ categories. We then use the posterior probabi\-lities $\pi_i$ to define the nonconformity measure
\begin{equation}
A[B,(z,i)] = p_i(z).
\label{A}
\end{equation}
Let $F_i$ refer to the distribution function of $p_i(Z)$ when $Z \sim f_i$, and let
\begin{equation}
 F = \sum_{i=1}^N \pi_i F_i
\end{equation}
be the corresponding mixture distribution function. The conformal algo\-rithm then corresponds to choosing a classifier \eqref{conpred} with
\begin{equation}
 c = F^{-1}(\delta),
 \label{quant}
\end{equation}
where, as mentioned, $1-\delta$ is the confidence level of the prediction region.
\par\medskip
As a last remark, if the penalty parameter is changed from $c$ to $\rho=c/p_{(N)}$ (cf. Example 2), we notice that \eqref{conpred} is equivalent to \eqref{Irho} for \textit{any} value of $\rho \ge 0$.
\hfill\slut

\section{Several blocks of categories}\lb{Sec:Several}
This section will cover an extension of the classical classification problem, where observations belong to a category and categories belong to super\-categories, or blocks, as we will call them. Thus we partition the $N$ categories into $K$ blocks of sizes $N_1,\ldots,N_K$, with $\sum_{k=1}^K N_k = N$. Without loss of generality, the labels $i$ are defined so that each block
\beq
\cN_k = \left\{\sum_{l=1}^{k-1} N_l + 1 \le i \le \sum_{l=1}^k N_l \right\}
\lb{cNk}
\eeq
consists of adjacent categories. The different scenarios when it is worse to misclassify within a block than between blocks, and vice versa, will be a subject of study later on in the paper. In order to define classifiers that take this into account, we introduce a new type of reward functions.
\begin{definition} \label{def4}
 For a set of $N$ categories, partitioned into blocks $\cN_k$, $k=1,\ldots,K$ with $N_k$ categories in each block, \emph{block invariant reward functions} satisfy \eqref{RInv} only for block preserving permuations $\tau(\cN_k) = \cN_k$, $k=1,\ldots,K$.
\end{definition}
A class of block invariant reward functions is
\beq
R(\cI,i) = 1(i\in\cI) - g_{k(i)}(|\cI_{k(i)}|,|\cI|-|\cI_{k(i)}|),
\lb{Rgk}
\eeq
where 
\beq
\cI_k = \cI \cap \cN_k
\lb{cIk}
\eeq
contains the categories of the classified set $\cI$ that belong to block $k$, whereas $k(i)$ is the block to which $i$ belongs, i.e.\ $i\in \cN_{k(i)}$. Moreover, $g_k$ is a penalty term for misclassification, when the true category $i$ belongs to $\cN_k$. This term is a function of the number of categories $|\cI_{k(i)}|$ in the classified set $\cI$ that belong to the correct block as well as the number of classified categories $|\cI|-|\cI_{k(i)}|$ that belong to any of the wrong blocks. In order to define the classifiers of interest we order the posterior probabilities $p_i$, $i\in\cN_k$ within each block as
\begin{equation*}
 p_{(k1)} \le \ldots \le p_{(kN_k)}.
\end{equation*}
\begin{definition}
 For an integer vector
\begin{equation}
 \bm = (m_1,\ldots,m_K) \in \bigotimes_{k=1}^K \{0,\ldots,N_k\}
 \lb{bn}
\end{equation}
let 
\begin{equation}
 \hI_{\sbm} = \left\{(ki); \, 1\le k \le K, N_k+1-m_k\le i \le N_k\right\}
\lb{Ibnz}
\end{equation}
be a classifier that includes the $m_k$ categories with the largest posterior pro\-babilites from block $k$. We call this a \emph{composite classifier}.
\end{definition}
A composite classifier is a subset of $\cN$ of size 
\begin{equation}
\lb{nzSum}
m=\sum_{k=1}^K m_k.
\end{equation}
In particular, the two extreme scenarios with no categories classified or a rejection to classify, correspond to $\hI_{(0,\ldots,0)}=\emptyset$ and $\hI_{(N_1,\ldots,N_K)}=\cN$ respective\-ly. Moreover, \re{nzSum} reduces to an $m_k$-MP classifier \re{hInz} for a particu\-lar block $k$ if $\bm$ has only one non-zero element $m_k$ (cf. Example 4). The following result links composite classifiers to block invariant reward functions of type \re{Rgk}:
\par\medskip
\begin{prop}\lb{Prop:Several}
The optimal classifier, for a reward function \re{Rgk}, is a composite classifier $\hI(z)=\hI_{\sbfm{m}(z)}$ in \re{Ibnz}, with 
\beq
\bm(z) = \argmax_{\sbfm{m}=(m_1,\ldots,m_K)} \sum_{k=1}^K \left[v_k(m_k;z) - P_k g_k(m_k,m-m_k)\right],
\lb{nkz}
\eeq
where 
\begin{align}
P_k = P_k(z) &= \sum_{i\in\cN_k} p_i, \nonumber\\
m &= \sum_{k=1}^K m_k, \lb{vk} \\ 
v_k(m_k;z) &= \sum_{j=1}^{m_k} p_{(k,N_k+1-j)} \nonumber
\end{align}
and $v_k(0;z) = 0$ for any $k$.
\end{prop}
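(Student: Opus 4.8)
The plan is to follow the proof of Proposition \ref{Prop:One}, but carried out blockwise. Beginning from the characterisation \re{Vz} of the optimal classifier as the maximiser of $V(z;\cI)=\sum_{i=1}^N R(\cI,i)p_i$, I would substitute the reward function \re{Rgk} and separate the result into a reward part and a penalty part,
\[
V(z;\cI)=\sum_{i\in\cI} p_i - \sum_{i=1}^N g_{k(i)}\bigl(|\cI_{k(i)}|,|\cI|-|\cI_{k(i)}|\bigr)p_i .
\]

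The step I expect to be the crux is the treatment of the penalty sum. The penalty $g_{k(i)}\bigl(|\cI_{k(i)}|,|\cI|-|\cI_{k(i)}|\bigr)$ depends on the index $i$ only through its block $k(i)$, because once the block of $i$ is fixed the quantities $\cI_{k(i)}=\cI_k$ (via \re{cIk}) and $|\cI|$ are determined. Grouping the penalty sum by block therefore pulls the penalty out of the inner sum and leaves $\sum_{i\in\cN_k}p_i=P_k$ as a weight, so that, writing $m_k=|\cI_k|$ and $m=\sum_{k=1}^K m_k=|\cI|$,
\[
V(z;\cI)=\sum_{k=1}^K\Bigl[\,\sum_{i\in\cI_k} p_i - P_k\, g_k(m_k,m-m_k)\Bigr].
\]

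I would then fix the block-size vector $\bm=(m_1,\ldots,m_K)$ and optimise the choice of categories within each block. Since the penalty depends on $\cI$ only through the counts $m_k$ and $m$, it is constant over all $\cI$ with these block sizes, so maximising $V(z;\cI)$ reduces to maximising $\sum_{i\in\cI_k}p_i$ separately in each block over subsets $\cI_k\subset\cN_k$ with $|\cI_k|=m_k$. By the same ordering argument used for the inequality in \re{VzProof}, each such inner maximum equals $v_k(m_k;z)$ of \re{vk} and is attained exactly when $\cI_k$ consists of the $m_k$ most probable categories of block $k$, i.e.\ exactly for the composite classifier $\hI_{\sbm}$ in \re{Ibnz}. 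Hence $V(z;\cI)\le\sum_{k=1}^K[v_k(m_k;z)-P_k g_k(m_k,m-m_k)]$, with equality for composite classifiers, and the optimal classifier is $\hI(z)=\hI_{\sbm(z)}$ with $\bm(z)$ maximising the right-hand side over all block-size vectors, which is precisely \re{nkz}. The only mild subtlety is the coupling of the blocks through the total size $m=\sum_k m_k$ appearing in each $g_k$; this does not affect the penalty-free inner optimisation and is absorbed entirely into the outer maximisation over $\bm$.
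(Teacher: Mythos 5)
Your proposal is correct and follows essentially the same route as the paper's own proof: both decompose $V(z;\cI)$ blockwise, use that the penalty in \re{Rgk} depends on $i$ only through $k(i)$ so that $P_k$ factors out, and apply the ordering argument of \re{VzProof} within each block to show the maximum over sets with fixed block sizes is attained by the composite classifier \re{Ibnz}, reducing the problem to the outer maximisation \re{nkz} over $\bm$. Your explicit remark that the coupling of blocks through $m=\sum_k m_k$ is constant during the within-block optimisation and absorbed into the outer maximisation is a point the paper leaves implicit, but it is the same argument.
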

\begin{proof}
 The proof mimics that of Proposition \ref{Prop:One}. We start by finding the value function $V(z;\cI)$ in \re{Vz} for the block invariant reward function \re{Rgk}. It is given by  
\begin{align}
V(z;\cI) &=  \sum_{k=1}^K \sum_{i\in\cI_k} p_i - \sum_{k=1}^K P_k g_k(|\cI_k|,|\cI|-|\cI_k|) \nonumber\\
&\le  \sum_{k=1}^K \left[ \sum_{j=1}^{|\cI_k|} p_{(k,N_k+1-j)} - P_k  g_k(|\cI_k|,|\cI|-|\cI_k|)\right] \nonumber \\
&= \sum_{k=1}^K \left[ v_k(|\cI_k|;z) - P_k g_k(|\cI_k|,|\cI|-|\cI_k|)\right].
\lb{VzProof2}
\end{align}
From this it follows that $V(z;\cI)$ is maximized, among all $\cI$ with $|\cI_k|=m_k$ for $k=1,\ldots,K$, by $\hI_{(m_1,\ldots,m_K)}$ in \re{Ibnz}. The optimal classifier is therefore $\hI=\hI_{\sbm(z)}$, where $\bm(z)=(m_1(z),\ldots,m_K(z))$ is the value of $(|\cI_1|,\ldots,|\cI_K|)$ that maximizes the right hand side of \re{VzProof2}. Hence $\bm(z)$ is given by \re{nkz}.
\end{proof}
\par\medskip
{\bf Example 4} (Composite proportion-based reward functions.) We will con\-sider a class of reward functions that are special cases \re{Rgk}. These reward functions involve two cost parameters $a$ and $b$:
\par\medskip
\begin{definition}
 A block invariant reward function of the form
\begin{equation}
R(\cI,i) = 1(i\in\cI) - a\max\left(|\cI_{k(i)}|-1,0\right) - b\left(|\cI|-|\cI_{k(i)}|\right),
\lb{Rgkab}
\end{equation}
 where $0\le a \le b$ are fixed constants, is called a \emph{composite proportion-based reward function}.
\end{definition}
\par\medskip
It follows from Propositon \ref{Prop:Several} that composite proportion-based reward func\-tion have very explicit optimal classifiers:
\par\medskip
\begin{corollary} \label{cor:3}
A composite proportion-based reward function \eqref{Rgkab}, gives rise to an optimal classifier that is a composite classifier $\hI(z)=\hI_{\sbfm{m}(z)}$ in \re{Ibnz} with 
\begin{align}
m_k(z) &= \argmax_{0\le m_k\le N_k} \left[v_k(m_k,z) - a P_k\max(m_k-1,0) - b (1-P_k)m_k \right] \nonumber\\
&=  \max\{m_k^\prime(z),m_k^{\prime\prime}(z)\},
\lb{nkzab}
\end{align}
for $k=1,\ldots,K$, where
\beq
\begin{array}{rcl}
m_k^\prime(z) &=& 1\left(p_{(kN_k)}\ge (1-P_k)b\right),\\ 
m_k^{\prime\prime}(z) &=& \max\{2\le m_k \le N_k; \, p_{(k,N_k+1-m_k)} \ge P_k a + (1-P_k)b\},
\end{array}
\lb{mkprimbis}
\eeq
and with $\max \emptyset = -\infty$ used in the definition of $m_k^{\prime\prime}(z)$. 
\end{corollary}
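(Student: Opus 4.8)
The plan is to specialize Proposition~\ref{Prop:Several} to the penalty $g_k(m_k,m-m_k)=a\max(m_k-1,0)+b(m-m_k)$ obtained by reading off \re{Rgkab}, and then to collapse the resulting joint maximization over $\bm$ into $K$ independent scalar problems. Substituting this $g_k$ into \re{nkz}, the quantity to be maximized is $\sum_{k=1}^K\big[v_k(m_k;z)-aP_k\max(m_k-1,0)-bP_k(m-m_k)\big]$ with $m=\sum_l m_l$. First I would isolate the between-block contribution $b\sum_k P_k(m-m_k)$. Since $P_k=\sum_{i\in\cN_k}p_i$ and the posterior probabilities sum to one over all of $\cN$, we have $\sum_k P_k=1$, whence $\sum_k P_k(m-m_k)=m\sum_k P_k-\sum_k P_k m_k=\sum_l m_l-\sum_k P_k m_k=\sum_k(1-P_k)m_k$. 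This is the crux of the argument: the apparently global between-block penalty, coupled through the total size $m$, collapses into a sum of per-block terms.

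After this reduction the objective equals $\sum_{k=1}^K\big[v_k(m_k;z)-aP_k\max(m_k-1,0)-b(1-P_k)m_k\big]$, whose $k$-th summand depends on $m_k$ alone. The $\argmax$ over $\bm$ therefore separates, and $m_k(z)$ is exactly the scalar maximizer on the first line of \re{nkzab}, computed block by block.

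It remains to solve each scalar problem with the machinery of Corollary~\ref{Cor:Conv}. I would set $\tilde g_k(m_k)=aP_k\max(m_k-1,0)+b(1-P_k)m_k$, note $\tilde g_k(0)=0$, and verify convexity: the successive differences are $b(1-P_k)$ for the step from $0$ to $1$ and $aP_k+b(1-P_k)$ for every later step, and these are non-decreasing because $aP_k\ge0$. As $v_k(m_k;z)$ is concave in $m_k$ by the within-block ordering (just as in the proof of Proposition~\ref{Prop:One}), the function $v_k-\tilde g_k$ is concave, so its maximizer over $\{0,\ldots,N_k\}$ is the largest $m_k$ for which the inclusion criterion $p_{(k,N_k+1-m_k)}\ge\tilde g_k(m_k)-\tilde g_k(m_k-1)$ holds, the profitable steps forming an initial segment. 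This criterion reads $p_{(kN_k)}\ge(1-P_k)b$ when $m_k=1$ and $p_{(k,N_k+1-m_k)}\ge P_k a+(1-P_k)b$ when $m_k\ge2$, which are precisely the thresholds defining $m_k'(z)$ and $m_k''(z)$ in \re{mkprimbis}.

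Finally I would confirm that the maximizer equals $\max\{m_k'(z),m_k''(z)\}$; splitting into two regimes is convenient because the threshold jumps upward after the first category. If $m_k''(z)\ge2$, then the within-block ordering of the posterior probabilities together with $(1-P_k)b\le P_k a+(1-P_k)b$ forces the most probable category to clear its lower threshold as well, so $m_k'(z)=1$ and $\max\{m_k'(z),m_k''(z)\}=m_k''(z)$; this is the true maximizer, since steps $1,\ldots,m_k''(z)$ are all profitable whereas step $m_k''(z)+1$ is not. If instead no $m_k\ge2$ qualifies, the maximizer reduces to $m_k'(z)\in\{0,1\}$, which correctly records whether the single most probable category of the block is worth including. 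The main obstacle is the separability identity of the first paragraph; once $\sum_k P_k=1$ is exploited, everything reduces to a per-block instance of the already-established convex-penalty corollary together with a short case check.
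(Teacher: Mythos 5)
Your proof is correct and takes essentially the same route as the paper's: your separability identity $\sum_k P_k(m-m_k)=\sum_k(1-P_k)m_k$, obtained from $\sum_k P_k=1$, is exactly the paper's rearrangement of the between-block penalty in passing to \re{VzSeveral}, after which both arguments maximize each block's term separately and use concavity in $m_k$ to read off the thresholds. Your explicit convexity check of the per-block penalty and the case analysis behind $\max\{m_k^\prime(z),m_k^{\prime\prime}(z)\}$ simply spell out what the paper compresses into the remark that the objective in \re{nkzProof} is concave in $m_k$.
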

\begin{proof}
 In view of Proposition \ref{Prop:Several}, it suffices to prove that for a composite proportion-based reward function \re{Rgkab}, the optimal classifier is of type $\hI(z)=\hI_{\sbfm{m}(z)}$, where $\bm(z)=(m_1(z),\ldots,n_K(z))$ is given by \re{nkzab}. To this end, we first notice, from the right hand side of \re{VzProof2}, that the value function of a classified set $\hI_{\sbm}$ is
\begin{align}
V(z;\hI_{\sbm}) &= \sum_{k=1}^K \left[ v_k(m_k;z) - P_k \left(a\max\{m_k-1,0\} + b\sum_{l;l\ne k} m_l\right)\right] \nonumber\\
&= \sum_{k=1}^K \left[ v_k(m_k;z) - \left(P_k a\max\{m_k-1,0\} + (1-P_k) b m_k\right) \right].
\lb{VzSeveral}
\end{align}
Since $V(z;\hI_{\sbm})$ splits into a sum of $K$ terms that are functions of $m_1,\ldots,m_K$ respectively, it follows that $V(z;\hI_{\sbm})$ is maximized, as a function of $\bm$, by maximizing each term separately with respect to $m_k$. The maximum for term $k$, on the right hand side of \re{VzSeveral}, is attained for
\begin{equation}
 m_k(z) = \argmax_{0\le m_k \le N_k} \left[ v_k(m_k;z) - \left(P_k a\max\{m_k-1,0\} + (1-P_k) b m_k\right) \right],
\lb{nkzProof}
\end{equation}
in accordance with the first identity of \re{nkzab}. The second identity of \re{nkzab} follows from the fact that the function being maximized in \re{nkzProof} is concave in $m_k$.
\end{proof}

To give some more intuition to the choice of $a$ and $b$ for the reward function \re{Rgkab}, we will look at the penalty term 
\begin{equation*}
 g_{k(i)}(\cI) = a\max(|\cI_{k(i)}|-1,0) + b(|\cI|-|\cI_{k(i)}|)
\end{equation*}
and the optimal classifier $\hI(z)=\hI_{\sbfm{m}(z)}$ defined in equations \re{nkzab}-\re{mkprimbis} of Corollary \ref{cor:3}. Note that a cost of $a$ is incurred per {\it extra} category from the correct block in the classified set $\hI$, whereas a cost of $b$ is added for each category in the classified set originating from the wrong block. These costs are chosen, and can be interpreted as threshold values for including categories in the classifier, especially when looking at \eqref{mkprimbis}. From the first row of this equation we notice that a low value on $b$ means that we are more prone to include the most probable category from each block, whereas the second row implies that with a low value of $a$ we are more prone to include several categories from the same block. However, since $b$ occurs in the second row of \eqref{mkprimbis} as well, a small $a$ might not have any effect if $b$ is large. On the other hand, combining a small $b$ with a large $a$, we get a classifier that is composed of categories from many blocks, but few categories from each block. Such a classifier might not include the correct category, but will to a large extent include some category from the correct block, and might be suitable when we want to safeguard in particular against erraneous superclassification. Finally, if we want to ensure that $\hI \ne \emptyset$ in composite classification, we have to choose 
\begin{equation}
 b \le \max\left(\frac{p_{(1N_1)}}{1-P_1},\ldots,\frac{p_{(KN_K)}}{1-P_K}\right).
\end{equation}

%
\par \medskip
Let us end Example 4 by considering two special cases of the proportion-based reward function \re{Rgkab}. The first one occurs when $a=b=c$, and it corresponds to a reward function 
\beq
R(\cI,i) = 1(i\in\cI) - c\left[|\cI|-1(|\cI_{k(i)}|>0)\right]
\lb{RcMany}
\eeq
that differs slightly from \re{Rc} in that all categories in the classified set $\cI$ are penalized by $c$ when none of them belong to the correct block $k(i)$, that is, when $\cI_{k(i)}=\emptyset$.
\par\medskip
The second special case of \re{Rgkab} occurs when it is known that observation $z$ belongs to block $k$, i.e.\ $P_k(z)=1$. The classifier $\hI_{\sbm(z)}$ in \re{Ibnz}, with $\bm(z)=(m_1(z),\ldots,m_K(z))$ as in \re{nkzab}, then simplifies to 
$$
\begin{array}{rcl}
m_k(z) &=& \max\left\{1,\max\{2\le m_k \le N_k; \, p_{(k,N_k+1-m_k)} \ge a\}\right\},\\
m_l(z) &=& 0, \,\, l\ne k.
\end{array}
$$
This corresponds to an $m$-MP classifier \re{hInz}, with $m=m(z)$ as in \re{nzLin} and \re{nzLin2}, when classification is restricted to categories within class $k$, and a penalty $c=a$ is incurred per extra classified category. 
\hfill\slut
\par\medskip
{\bf Example 5} (Indifference zones.) Assume that we want to know which of $N$ normally distributed populations with unit variances and expected values $\theta_1,\ldots,\theta_{N}$ has the largest expected value $\theta_{(N)}$. Let 
$$
Z=(Z_{ij}; \, 1\le i \le N, 1\le j \le n_i)
$$
be a sample of independent random variables, with $Z_{ij}\sim N(\theta_i,1)$. Letting $\phi$ be the density function of a standard normal distribution, this gives a likelihood 
$$
f(z;\theta) = \prod_{i=1}^N \prod_{j=1}^{n_i} \phi(z_{ij}-\theta_i)
$$
with a parameter vector $\theta=(\theta_1,\ldots,\theta_{N})$ that is assumed to have a prior density $P(\theta)$. Divide the parameter space $\Theta = \R^{N}$ into a disjoint union 
\beq
\Theta = \Theta_1 \cup \ldots \cup \Theta_{N+1}
\lb{Theta}
\eeq
of $N+1$ regions, where
\beq
\begin{array}{rcl}
\Theta_i &=& \{\theta; \, \theta_i = \theta_{(N)} \ge \theta_{(N-1)} + \eps\}, \quad i=1,\ldots,N,\\
\Theta_{N+1} &=& \Theta \setminus (\Theta_1\cup \ldots \cup \Theta_{N})
\end{array}
\lb{Thetai}
\eeq
and $\eps>0$ is a small number. Whereas $\Theta_1,\ldots,\Theta_{N}$ correspond to all hypo-theses where some parameter $\theta_i$ is largest by a margin of at least $\eps$, $\Theta_{N+1}$ is the {\it indifference zone}, where none of the populations has an expected value that is at least $\eps$ units larger than all the others (Bechhofer, 1954, Goldsman, 1986).  
\par\medskip
It is possible to put this model into the framework of Section \ref{Sec:ClassSetRew}, with 
\beq
\begin{array}{rcl}
\pi_i &=& \int_{\Theta_i} P(\theta)d\theta,\\
f_i(z) &=& \int_{\Theta_i} f(z;\theta)P(\theta)d\theta / \int_{\Theta_i} P(\theta)d\theta,
\end{array}
\lb{piifi}
\eeq
for $i=1,\ldots,N+1$, and an extra category $N+1$ is added that represents the indifference zone. We will consider reward functions 
\beq
R(\cI,i) = \left\{\begin{array}{ll}
1(\cI=\{i\}), & i=1,\ldots,N,\\
r 1(\cI=\emptyset), & i=N+1,
\end{array}\right.
\lb{RIndZone}
\eeq
where $r>0$ is the reward of not selecting any population as having the largest mean, when the parameter vector belongs to the indifference zone. Formally, this corresponds to a block invariant reward function with the two blocks
\beq
\begin{array}{rcl}
\cN_1 &=& \{1,\ldots,N\},\\
\cN_2 &=& \{N+1\}
\end{array}
\lb{cN1cN2}
\eeq 
of categories, although misclassification in this example is more serious {\it within} than between blocks. However, the reward function \re{RIndZone}, with blocks as in \re{cN1cN2}, does not belong to the class of reward functions in \re{Rgk}. It is therefore not possible to make use of Propositon \ref{Prop:Several}, but it can still be seen that the optimal classifier is 
$$
\hI = \left\{\begin{array}{ll}
(1N); & p_{(1N)}\ge r p_{N+1},\\
\emptyset; & p_{(1N)} < rp_{N+1}.
\end{array}\right.
$$
In Section \ref{Sec:Disc} we briefly discuss an extended class of reward functions that includes \re{RIndZone}. 
\hfill\slut  

\section{Illustration for an ornithological data set}\lb{Sec:Data}
In this section taxon identification will be used as a case study. In particular, we will look at four bird species that are morphologically similar, but share three measurable traits. In \citet{karlsson2021identification}, we treat the under\-lying fitting problem in detail, with covariates, heteroscedasticity, missing values and imperfect observa\-tions, examplified on data on the same four bird species. Since this paper has a stronger emphasis on developing a theory of classification, we use a subset of data from \citet{karlsson2021identification} for the purpose of illustration. This data set includes complete observations only from a certain stratum of the population, elimina\-ting the need for covariates. To a large extent, our data is the same as in \citet{Malmhagen2013}, where the same classification problem is treated with mostly descriptive statistics.

\subsection{The data set}
\begin{table}[ht]
\centering
\begin{tabular}{rr}
  \hline
 species & $n_i$ \\ 
  \hline
Reed warbler & 409 \\ 
  Blyth's reed warbler &  41 \\ 
  Paddyfield warbler &  18 \\ 
  Marsh warbler & 414 \\ 
   \hline
\end{tabular}
\caption{The number of observations of each species.}
\label{datatab}
\end{table}

The four species considered are \emph{Reed warbler}, \emph{Marsh warbler}, \emph{Blyth's reed warbler} and \emph{Paddyfield warbler}, and the three shared traits are \emph{wing length}, \emph{notch length} and \emph{notch position}. For details on the measurements of the traits, see e.g. \citet{svensson1992identification, Malmhagen2013} and \citet{karlsson2021identification}. In total we have 882 complete observations of juvenile birds, with the number $n_i$ of birds of each species given in Table \ref{datatab}. This gives rise to a training data set 
$$
\cD_i = \{z_{ij}; \, j=1,\ldots,n_i\}
$$
for each species $i = 1,\ldots,4$.
\par\medskip
The species were partitioned as follows: \emph{Reed warbler} and \emph{Marsh Warbler} constitute the block ``common breeders'', \emph{Blyth's reed warbler} constitutes the block ``rare breeder'' and \emph{Paddyfield warbler} constitutes the block ``rare vag-rant''. We acknowledge that the grouping is quite arbitrary and that the block names are inaccurate in most places of the world, but here it will mainly illustrate classification with a partitioned label space. 

\subsection{Model}\lb{Sec:Model}
We assume that the parameters associated with each category are indepen\-dent. If $\theta_i\in\Theta_i$ is the parameter vector associated with category $i$, with a prior distribution $P(\theta_i)$, and if $f(\cD_i;\theta_i)$ refers to the likelihood of training data for taxon $i$, the posterior distribution of $\theta_i$ is  
\beq
P(\theta_i|\cD_i) = \frac{P(\theta_i)f(\cD_i|\theta_i)}{P(\cD_i)}.
\lb{PostThetai}
\eeq
Let $z$ be an observed new data point that we wish to classify, based on training data. If $f(z;\theta_i)$ is the likelihood of the test data point for taxon $i$, we integrate over the posterior distribution \re{PostThetai} in order to obtain the corresponding likelihood  
\beq
f_i(z) = \int_{\Theta_i} f(z;\theta_i)P(\theta_i|\cD_i)d\theta_i
\lb{fiz}
\eeq
of $z$ for the hypothesis $H_i$ that corresponds to this taxon. Then we insert \re{fiz} into \re{pi} in order to obtain the posterior probabilities $p_i(z)$ of all categories. In our example we assume that $z_{ij} \sim \text{MVN}\left(\mu_i, \Sigma_i\right)$ has a multivariate normal distribution $f(\cdot;\theta_i)$, with $\theta_i = (\mu_i,\Sigma_i)$. We estimate \eqref{fiz} using Monte Carlo simulation, i.e.\ we simulate $L$ realizations of $\theta_{il} = \left(\mu_{il}, \Sigma_{il}\right)$ from $P(\theta_i \mid \cD_i)$ for each $i$, compute
\beq
\hat{f}_i(z) = \frac{1}{L} \sum_{l=1}^L f(z;\theta_{il})
\lb{fizq}
\eeq
and then plugg \eqref{fizq} into \eqref{pi}. For a more detailed model setup, see Section 3.1 and Appendix A of \citet{karlsson2021identification}. All implementation was done in \textsf{R} \citep{r2021}, using the package \texttt{mvtnorm} \citep{mvtnorm}.

\subsection{Classifiers based on composite proportion-based re\-ward functions}\lb{Sec:CVProc}

We will derive composite classifiers from the composite proportion-based reward function \re{Rgkab}. This reward function involves the two constants $a\ge 0$ and $b>0$, and the corresponding classifier of $z$ is denoted $\hI_{(a,b)}(z)$. We will regard $0 \le \va = a/b$ as a fixed parameter that quantifies how much more severe it is to misclassify a category outside a block than inside it, with severity inversly proportional to $\varepsilon$. If $0\le \va < 1$, it is more severe to misclassify between blocks than within, whereas the opposite is true when $\va>1$. The parameter $b$ will be chosen through leave-one-out cross validation. To this end, let $\tilde{R}(\cI,i)$ be a binary-valued reward function (to be chosen below) without any penalty term, and let 
\begin{equation}
R_{ab}^{\scr{cv}}(\tilde{R}) = \sum_{i=1}^N \frac{w_i}{n_i} \sum_{j=1}^{n_i} \tR(\hI_{(a,b)}(z_{ij}),i)
\lb{Rbcv}
\end{equation}
refer to the fraction of observations $z_{ij}$ in the training data set $\cD = \cD_1 \cup \ldots \cup \cD_N$ that return a reward in the cross-validation procedure. That is, $R_{ab}^{\scr{cv}}(\tilde{R})$ equals the fraction of observations $z_{ij}$ with $\tR(\hI_{(a,b)}(z_{ij}),i)=1$, where $\hI_{(a,b)}(z_{ij})$ is the classifier of $z_{ij}$ based on the rest of the data.      
It is further assumed that $w_i$ are non-negative weights that sum to $1$, such as $w_i=1/N$ or $w_i = n_i/\sum_{j=1}^N n_j$. 
\par\medskip
The choice of $b$ will depend on which reward function $\tR$ that is used in \re{Rbcv}. 
Some possible choices of the binary-valued reward function are given in Table \ref{tab:reward}.
For two of them ($\tR_3$ and $\tR_4$) the reward $\tR(\cI,i)$ is a non-decreasing function of $\cI$, and therefore the non-reward rate $1-R_{\va b,b}^{\scr{cv}}$, obtained from the cross validation procedure \re{Rbcv}, is a non-decreasing function of $b$. We will therefore choose $b$ as the largest cost parameter for which the non-reward rate is at most $\de>0$, i.e.
\begin{equation}
b_{\va\de} = \max\{b\ge 0; \, 1-R_{\va b,b}^{\scr{cv}} \le \de\}.
\lb{b}
\end{equation}
In particular, it can be shown that when the reward function $\tR_3$ is used in \eqref{b}, this generalizes the conformal algorithm \citep[Section 4.3]{shafer2008tutorial} from the case of one block ($K=1$) to several blocks ($K>1$), although we use cross-validation from training data rather than prediction of a new observations, as in \citet{shafer2008tutorial}.
\par\medskip
For the other two reward functions $\tR_1$ or $\tR_2$ of Table \ref{tab:reward}, it is no longer the case that estimated non-reward rate $1 - R_{\va b,b}^{\scr{cv}}$ in \re{Rbcv} is monotonic in $b$. For these two choices of $\tR$ we rather choose the cost parameter
\beq
b_\va = \arg\min_{b\ge 0} (1 - R_{\va b,b}^{\scr{cv}}) 
\lb{b2}
\eeq
in order to minimize the estimated non-reward rate. 

\begin{table}
 \centering
 \def\arraystretch{1.5}
 \begin{tabular}{|l|l|}
 \hline
 \textit{Binary reward function} & \textit{Reward critera}\\
 \hline
  $\tR_1(\cI,i) = 1(\cI=\{i\})$ & Correct (point) classification. \\ \hline
  $\tR_2(\cI,i) = 1(i\in\cI \land \cI \subseteq \cN_{k(i)})$ & \makecell[l]{Correct category is in the classifier, \\and no category from an incorrect \\ block is in the classifier. \\ The analogy of $\tR_1$ for block prediction.} \\ \hline
  $\tR_3(\cI,i) = 1(i\in\cI)$ & Correct category in classifier. \\ \hline
  $\tR_4(\cI,i) = 1(\cI\cap\cN_{k(i)}\ne\emptyset)$ & \makecell[l]{Some category from the correct block \\ in the classifier. \\ The analogy of $\tR_3$ for block prediction.} \\ \hline
 \end{tabular}
\caption{The four binary reward functions used in the case study. Since $\tR_1(\cI,i) \le \tR_2(\cI,i) \le \tR_3(\cI,i) \le \tR_4(\cI,i)$, it follows that $\tR_1$ is the least generous reward function and $\tR_4$ the most generous one. Note that $R_{\va b,b}^{\scr{cv}}(\tilde{R}_3)$ and $R_{\va b,b}^{\scr{cv}}(\tilde{R}_4)$ are both decreasing functions of $b$, so that \re{b} makes sense for choosing $b$ for any of these two choices of $\tR$, whereas \re{b2} is more approriate for choosing $b$ for the other two reward functions.}
\label{tab:reward}
\end{table}

\par\medskip
We will look at three different prior distributions, namely a uniform prior ($\piflat$), a prior proportional to the number of observations $n_i$ from each category in training data ($\piprop$), and a prior proportional to the number of registered birds of each species at the Falsterbo Bird observatory throughout its operational history ($\pireal$). The purpose of $\piflat$ is to represent a situa\-tion of no prior knowledge on how likely any of the categories is to occur. The prior $\pireal$ is supposed to examplify a real world situation of having some commonly occuring species and some very rare ones, whereas $\piprop$ is a middle ground between these two extremes that fits the data set $\cD$ very well. As weights we choose 
\begin{align}
  \wbird_i &= n_i/\sum_{j=1}^4 n_j, \nonumber\\
  \wspec_i &= 1/4, \nonumber\\
  \wrare_i &= 1 / \sum_{j=1}^4 \pi_i^{(\text{real})}/{\pi_j^{(\text{real})}},
\label{weights2}
\end{align}
for $i=1,2,3,4$. We weight each observation equally with $\wbird$, each species equally with $\wspec$, whereas the species are weighted higher the less expected they are with $\wrare$. Since the number of observations are not balanced across species, $\wbird$ will weight species higher the more common they are, i.e.\ the more observations we have of them. Using $\wspec$, birds will be weighted unequally due to the same imbalance (the less birds of a given species there are, the higher weights are assigned to these birds). Finally, as mentioned above, $\wrare$ weights less frequently observed species more heavyly; the rationale being that we value observations of rarely occuring species, as data on these are scarce.

\subsection{Results}

We will analyze the estimated reward rate \eqref{Rbcv} for the four choices of $\tilde{R}$ that are listed in Table 2. In Tables \ref{Tab:bdelta} and \ref{Tab:bopt} we present the automatic choice of the cost parameter $b$ (cf.\ \re{b} and \re{b2}) for two ratios $\va = 1/2$ and $\va = 2$ of $a$ and $b$, and for all nine combinations priors and weights. As can be seen from Table \ref{Tab:bopt}, the same optimal $b$-values are found for $\tR_1$ and $\tR_2$, with the same non-reward rates. This is mostly due to the small block sizes, meaning that it sometimes is equivalent to pick the correct species, as to pick the correct block.
\par\medskip
In Figures \ref{bbig} and \ref{abig} we plot the value of estimated non-reward rate $1-R_{\va b,b}^{\scr{cv}} $ for a grid of $b$-values, for $\va=1/2$ and $\va=2$ respectively. Note the monotone decrease of $\tR_3$ and $\tR_4$ as $b$ decreases. Also notice the minimums of $\tR_1$ and $\tR_2$ in the graphs.
\par\medskip
Finally we refer to Appendix \ref{app:2d} for further visualisations of $\tR_1$ and $\tR_2$, evaluated over a lattice of $a$ and $b$-values. It can seen that for $\tR_2$ the optimal non-reward rate is achived by choosing $a=0$. This is straightforward to explain, as $\tR_2$ does not punish the inclusion of several categories from the correct block. For this reason the classifier $\hI_{(a,b)}$ with minimal non-reward rate includes as many categories as possible from each block with at least one classified member, corresponding to $a=0$. Notice also that there are large regions of values of $a$ and $b$ that attain the minimum non-reward rate.

\begin{table}
\centering
\footnotesize
\def\arraystretch{1.3}
 \begin{tabular}{|c|c|c|c|c|c|}
 \hline
  \multirow{2}{*}{$\va$} & \multirow{2}{*}{Prior}& \multirow{2}{*}{$\tilde{R}$} & \multicolumn{3}{c}{$b_{\va,0.05}$} \vline \\ \cline{4-6}
  & & & $\wbird$ & $\wspec$ & $\wrare$ \\
  \hline
\multirow{6}{*}{$1/2$} & \multirow{2}{*}{$\piflat$} & $\tR_3$ & $\ge 20.00$ & 2.29 & 2.11 \\ 
   &  & $\tR_4$ & $\ge 20.00$ & 3.05 & 2.11 \\ \cline{2-6}
  & \multirow{2}{*}{$\piprop$} & $\tR_3$ & $\ge 20.00$ & 5.23 & 4.81 \\ 
  &  & $\tR_4$ & $\ge 20.00$ & 6.96 & 4.81 \\ \cline{2-6}
  & \multirow{2}{*}{$\pireal$} & $\tR_3$ & 1.07 & 0.43 & 0.18 \\ 
  &  & $\tR_4$ & $\ge 20.00$ & 0.52 & 0.18 \\ \cline{1-6}
  \multirow{6}{*}{$2$} & \multirow{2}{*}{$\piflat$} & $\tR_3$ & $\ge 20.00$ & 2.29 & 2.11 \\ 
   &  & $\tR_4$ & $\ge 20.00$ & 3.05 & 2.11 \\ \cline{2-6}
   & \multirow{2}{*}{$\piprop$} & $\tR_3$ & $\ge 20.00$ & 5.23 & 4.81 \\ 
   &  & $\tR_4$ & $\ge 20.00$ & 6.96 & 4.81 \\ \cline{2-6}
   & \multirow{2}{*}{$\pireal$} & $\tR_3$ & 1.07 & 0.21 & 0.18 \\ 
   &  & $\tR_4$ & $\ge 20.00$ & 0.52 & 0.18 \\ 
  \hline
 \end{tabular}
 \caption{The table specifies the estimated values of the cost parameter $b$, using \eqref{b} with $\delta=0.05$. These estimates of $b$ are computed for each combination of $\va$, prior $\pi_i$, and weights $w_i$. We evaluated \eqref{Rbcv} for $0.01 \le b \le 20$ with a resolution of 0.01.}
 \label{Tab:bdelta}
\end{table}

\begin{table}
\centering
\footnotesize
\def\arraystretch{1.5}
 \begin{tabular}{|c|c|c|c|c|c|c|c|c|}
 \hline
  \multirow{2}{*}{$\va$} & \multirow{2}{*}{Prior}& \multirow{2}{*}{$\tilde{R}$} & 
  \multicolumn{2}{c}{$\wbird$} \vline & \multicolumn{2}{c}{$\wspec$} \vline & \multicolumn{2}{c}{$\wrare$} \vline \\ \cline{4-9}
  & & & $b_\va$ & \makecell[c]{non-reward \\ rate} & $b_\va$ & \makecell[c]{non-reward \\ rate} & $b_\va$ & \makecell[c]{non-reward \\ rate} \\
  \hline
\multirow{6}{*}{$1/2$} & \multirow{2}{*}{$\piflat$} & $\tR_1$ & 1.24 & 1.59\% & 17.16 & 9.12\% & 1.16 & 4.06 \% \\ 
& & $\tR_2$ & 1.24 & 1.59\% & 17.16 & 9.12\% & 1.16 & 4.06\% \\ \cline{2-9}
& \multirow{2}{*}{$\piprop$} & $\tR_1$ & 2.63 & 1.59\% & 2.24 & 4.06\% & 2.63 & 4.06\% \\
&  & $\tR_2$ & 2.63 & 1.59\% & 2.24 & 4.06\% & 2.63 & 4.06\% \\ \cline{2-9}
& \multirow{2}{*}{$\pireal$} & $\tR_1$ & 10.28 & 6.69\% & 10.28 & 19.41\% & 10.28 & 49.62\% \\
&  & $\tR_2$ & 10.28 & 6.69\% & 10.28 & 19.41\% & 10.28 & 49.62\% \\
  \hline
\multirow{6}{*}{$2$} & \multirow{2}{*}{$\piflat$} & $\tR_1$ & 1.24 & 1.59\% & 17.16 & 9.12\% & 1.16 & 4.06\% \\ 
& & $\tR_2$ & 1.24 & 1.59\% & 17.16 & 9.12\% & 1.16 & 4.06\% \\ \cline{2-9}
& \multirow{2}{*}{$\piprop$} & $\tR_1$ & 2.63 & 1.59\% & 2.24 & 4.06\% & 2.63 & 4.06\% \\
&  & $\tR_2$ & 2.63 & 1.59\% & 2.24 & 4.06\% & 2.63 & 4.06\% \\ \cline{2-9}
& \multirow{2}{*}{$\pireal$} & $\tR_1$ & 10.28 & 6.69\% & 10.28 & 19.41\% & 10.28 & 49.62\% \\
&  & $\tR_2$ & 10.28 & 6.69\% & 10.28 & 19.41\% & 10.28 & 49.62\% \\ \hline
 \end{tabular}
 \caption{The table specifies the estimated values of the cost parameter $b$, using \re{b2}. These estimates of $b$ are computed for each combination of $\va$, prior $\pi_i$, and weights $w_i$. They were found using the \texttt{optimise}-function in \textsf{R}.}
 \label{Tab:bopt}
\end{table}

\begin{figure}
 \centering
 \caption{\footnotesize This figure represents the case $\va = 1/2$. The prior is specified in the title of each graph, whereas the weights are explained in the subcaptions. Each color of the functions in the graphs correspond to one of the four reward functions $\tR_1,\tR_2,\tR_3,\tR_4$, given by the legends above each subfigure.
 For all priors and weights we observe that $\tR_3$ and $\tR_4$ decrease monotonically as $b$ decreases, whereas $\tR_1$ and $\tR_2$ have a global minimum.}
     \begin{subfigure}[b]{\textwidth}
         \centering
         \caption{\footnotesize Using $\wbird$ we observe similar curves for $\piflat$ and $\piprop$, whereas $\pireal$ gives overall higher non-reward rates.}
         \includegraphics[width=\textwidth]{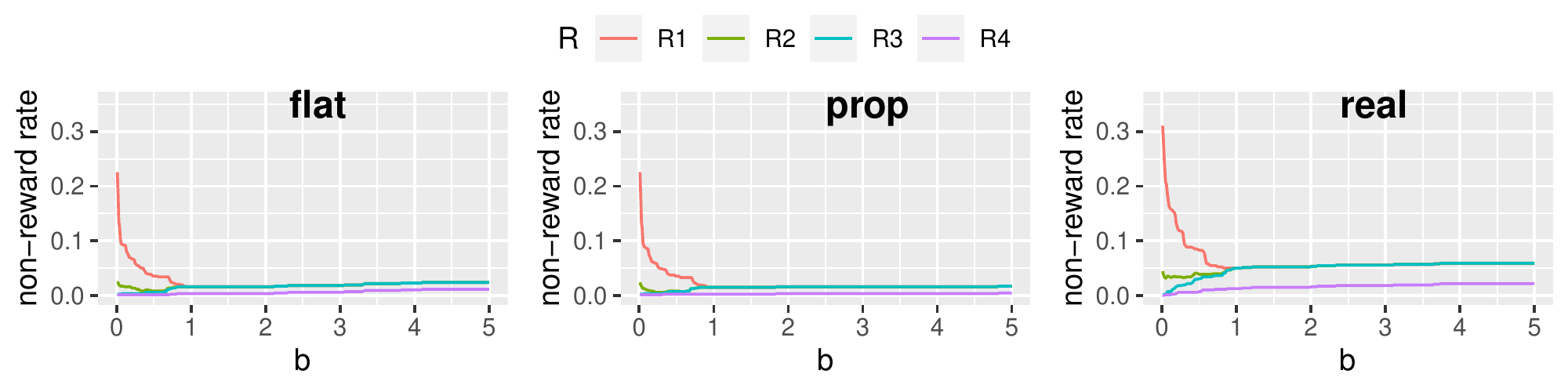}
     \end{subfigure}
     \vfill
     \begin{subfigure}[b]{\textwidth}
         \centering
         \caption{\footnotesize Using $\wspec$, the curves for $\tR_1$ and $\tR_2$ look similar for $\piflat$ and $\piprop$, wheras the large drop in $\tR_3$ and $\tR_4$ occurs either for large values of $b$ ($\piflat$) or for small values of $b$ ($\piprop$). Again, the non-reward rates are overall higher using $\pireal$.}
         \includegraphics[width=\textwidth]{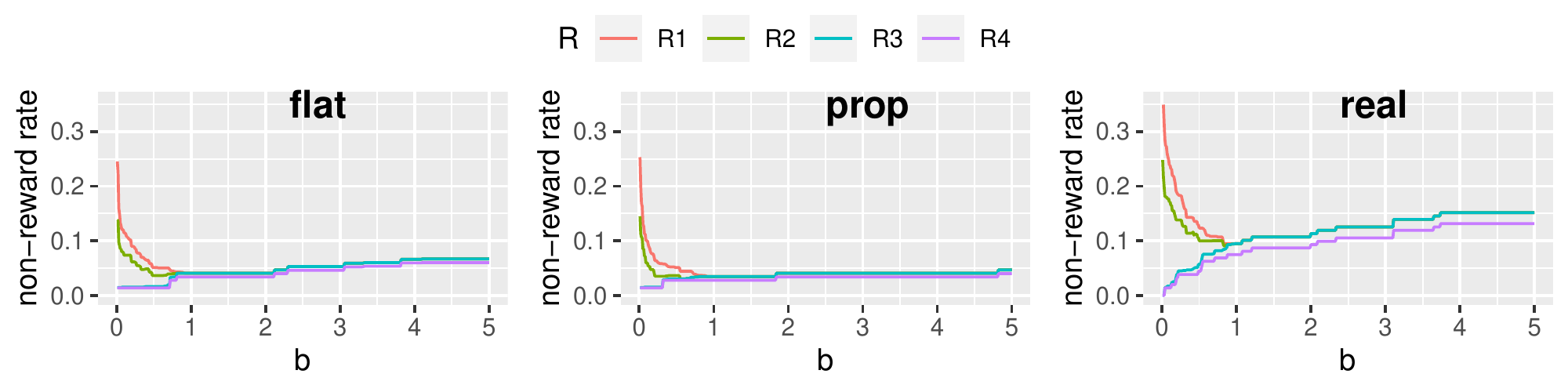}
     \end{subfigure}
     \vfill
     \begin{subfigure}[b]{\textwidth}
         \centering
         \caption{\footnotesize The scale along the vertical axis (for the non-reward rate) is different in these subplots compared to (a) and (b). Using $\wrare$, with a very uneven weighting of species, the curves for $\tR_1$ and $\tR_2$, as well as for $\tR_3$ and $\tR_4$, take values very close to each other. Thus it seems like these curves overlap, when in reality they do not. This is just a consequence of the extreme amount of up-weighting of rare species, which are both in singelton blocks.}
         \includegraphics[width=\textwidth]{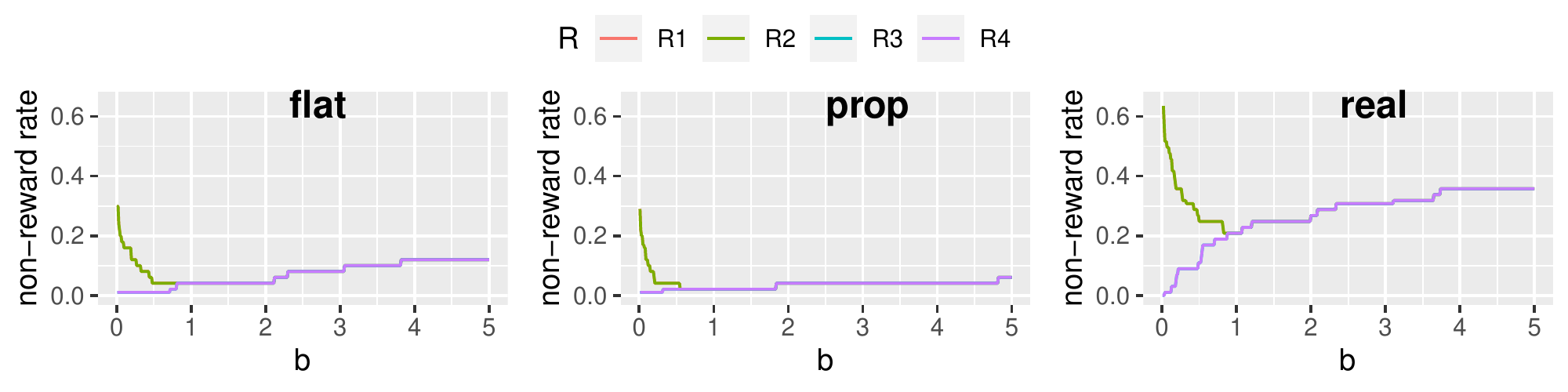}
     \end{subfigure}
        \label{bbig}
\end{figure}

\begin{figure}
 \centering
 \caption{\footnotesize This figure represents the case $\va = 2$. The prior is specified in the title of each graph, whereas the weights are explained in the subcaptions. Each color of the functions in the graphs correspond to one of the four reward functions $\tR_1,\tR_2,\tR_3,\tR_4$, given by the legends above each subfigure.
 For all priors and weights we observe that $\tR_3$ and $\tR_4$ decrease monotonically as $b$ decreases, whereas $\tR_1$ and $\tR_2$ have a global minimum.}
     \begin{subfigure}[b]{\textwidth}
         \centering
         \caption{\footnotesize Using $\wbird$ we observe similar curves for $\piflat$ and $\piprop$, whereas $\pireal$ gives overall higher non-reward rates.}
         \includegraphics[width=\textwidth]{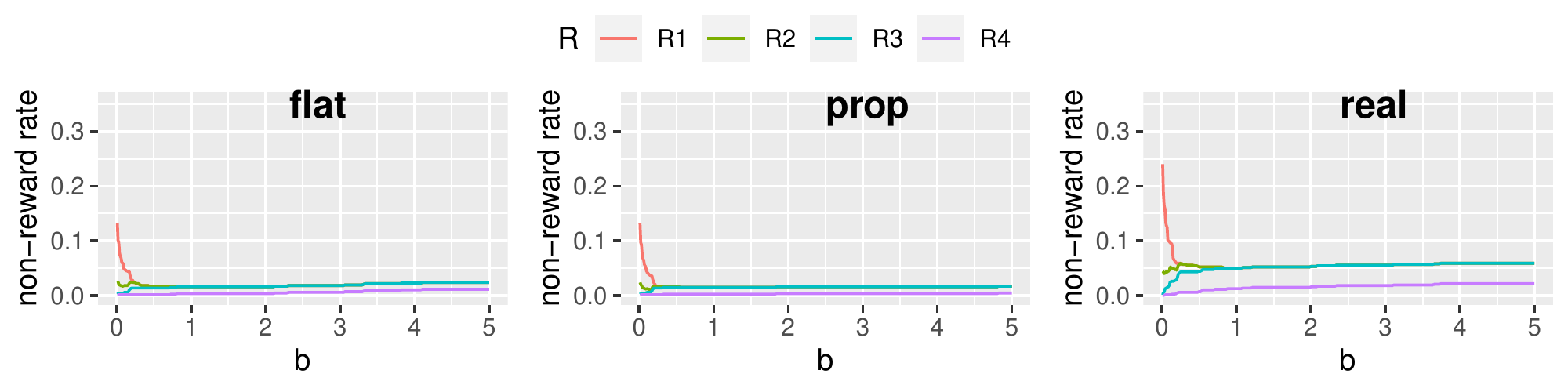}
     \end{subfigure}
     \vfill
     \begin{subfigure}[b]{\textwidth}
         \centering
         \caption{\footnotesize Using $\wspec$, the curves for $\tR_1$ and $\tR_2$ look similar for $\piflat$ and $\piprop$, wheras the large drop in $\tR_3$ and $\tR_4$ occurs either for large values of $b$ ($\piflat$) or for small values of $b$ ($\piprop$). Again, the non-reward rates are overall higher using $\pireal$.}
         \includegraphics[width=\textwidth]{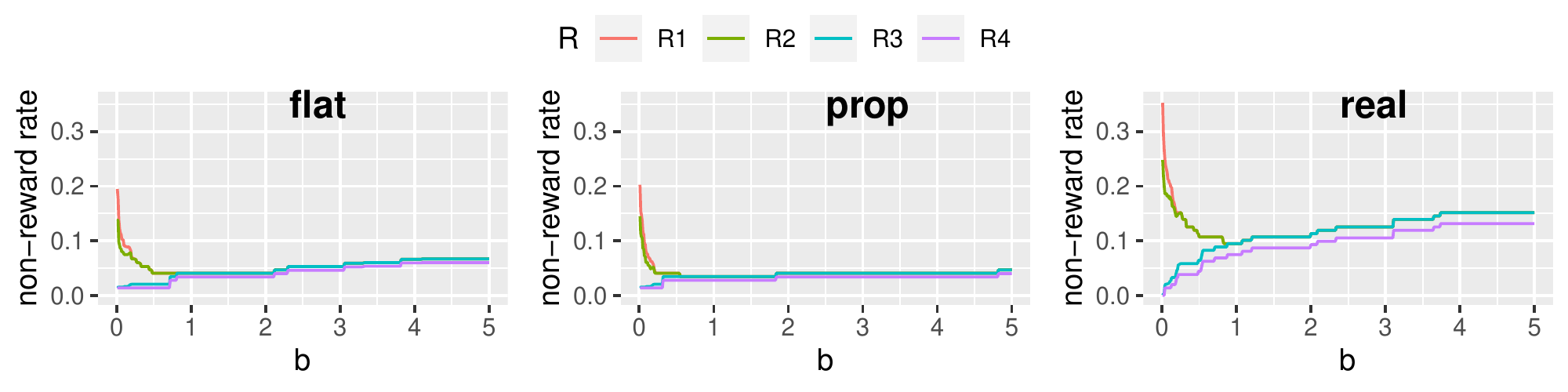}
     \end{subfigure}
     \vfill
     \begin{subfigure}[b]{\textwidth}
         \centering
         \caption{\footnotesize The scale along the vertical axis (for the non-reward rate) is different in these subplots compared to (a) and (b). Using $\wrare$, with a very uneven weighting of species, the curves for $\tR_1$ and $\tR_2$, as well as for $\tR_3$ and $\tR_4$, take values very close to each other. Thus it seems like these curves overlap, when in reality they do not. This is just a consequence of the extreme amount of up-weighting of rare species, which are both in singelton blocks.}
         \includegraphics[width=\textwidth]{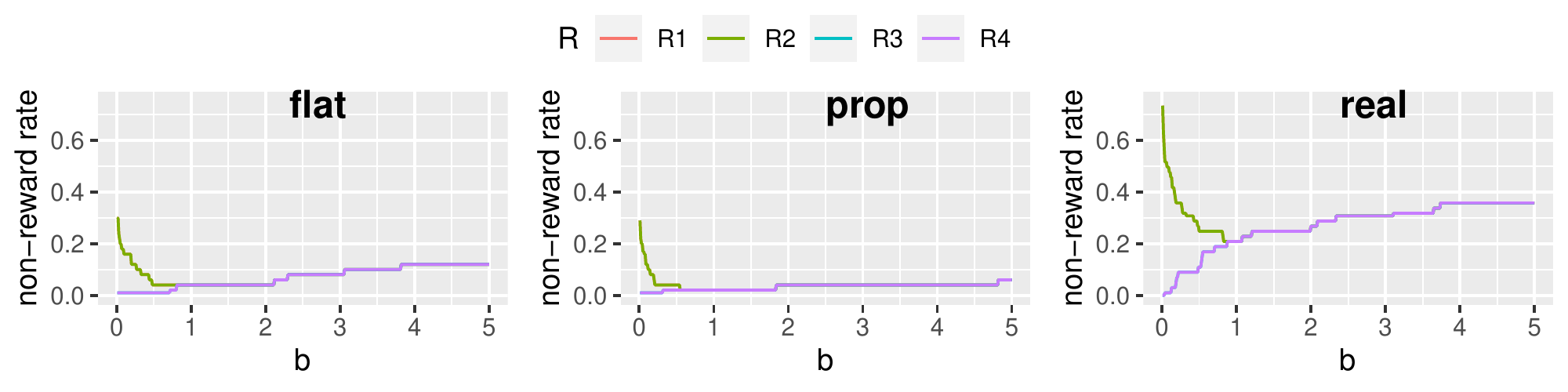}
     \end{subfigure}
        \label{abig}
\end{figure}

\newpage
\section{Discussion}\lb{Sec:Disc}

In this article we introduce a general framework of set-valued classification of data that originates from one of a finite number of possible hypotheses. We introduce reward functions with a set-valued input argu\-ment, and derive the optimal (Bayes) classifier by maximizing the expected reward. Explicit formulas for the Bayes classifier are derived for a large class of reward func\-tions. This includes scenarios where hypotheses either consitute one homo\-geneous block, or consist of several blocks of hypotheses, such that ambiguity within blocks is (typically) less serious than ambiguity between them. Our procedure is illustrated with an ornithological data set, where taxa (hypo\-theses) are divided into blocks. 
\par\medskip
As mentioned in \citet{ripley2007pattern}, a possible reason for including reject options is to obtain classifiers that are more reliable but also less expensive to use. In our case study of Section \ref{Sec:Data}, for instance, a possible option is to consult an expert who would be able to identify the bird species morphologically, without using the measured traits. Although expertise does not come cheap, this could still be an alternative when the expected cost of algorithmic classification exceeds the cost of consulting an expert. The latter cost might be independent or a function of the number of hypotheses she needs to consider. In the former case, the reject option of \citet{ripley2007pattern} would suffice, and in the latter case a partial rejection, as proposed in this paper, could be beneficial.
\par\medskip
A number of generalizations of our work is possible. Firstly, suppose the new observation $z$ that we want to classify, by means of the optimal classifier $\hI=\hI(z)$ in \re{Vz}, involves a covariate vector $x$ and a reponse variable $Y$. Following \citet{karlsson2021identification}, the most straightforward approach is to include covariates into the observation $z=(x,Y)$ that is to be classified. The covariate information will then be included in the category distributions $f_i$ and posterior probabilities $p_i$ of all categories $i=1,\ldots,N$, as well as in the resulting $m$-MP and composi\-te  classifiers. However, if we also want the ambiguity of the classifier to depend on covariate information, it is possible to let the cost parameters of the reward function $R$ depend on $x$ as well. For instance, the conformal prediction algorithm of Example 3 involves choosing the cost $c(\de) = F^{-1}(\delta)=Q(\de)$ of including more lables in the classifier, as a quantile $Q(\delta)$ of the distribution $F$ of posterior probabilities (cf.\ \eqref{quant}). In this context it is possible to use quantile regres\-sion \citep{koenker2005quantile,bottai2010} and choose the cost parameter $c(\de;x)=Q(\delta | x) = g^{-1}[x^\top \beta(\delta)]$ as a conditional quantile function. This is a regression model where the parameter vector $\beta(\delta)$ is a function of the quantile $\delta$, whereas $g$ is a link function. This approach might be particularly helpful for models with heteroscedasticity.
\par\medskip
Secondly, it is possible to consider reward functions with other penalty terms. Instead of penalizing the number of classified categories within the correct and wrong blocks respectively, as in \re{Rgk}, one penalizes the number of {\sl wrongly classified categories} within the correct and wrong blocks. This corresponds to a reward function  
\beq
R(\cI,i) = 1(i\in \cI) - g_{k(i)}(|\cI_{k(i),(-i)}|,|\cI|- |\cI_{k(i)}|,|\cI|),
\lb{RSeveral}
\eeq
where $\cI_{k(i),(-i)} = \cI_{k(i)}\setminus \{i\}$ is the number of wrong categories of $\cI$ that belong to block $k(i)$ when category $i$ is true. For instance, the reward function \re{RIndZone} for indifference zones (Example 5) can be formulated in accordance with \re{RSeveral}. In order to see this let $m_1$ and $m_2$ refer to the number of wrongly classified categories of blocks 1 and 2, whereas $m$ is the size of the classified set (hence $m-(m_1+m_2)$ equals 1 or 0 depending on whether $i\in\cI$ or not). Then \re{RIndZone} corresponds to having penalty terms 
$$
\begin{array}{rcl}
g_1(m_1,m_2,m) &=& 1(m=m_1+m_2+1\mbox{ and }(m_1,m_2)\ne (0,0)),\\
g_2(0,m_1,m) &=& 1(m=m_1+1) -r1(m=m_1\mbox{ and }m_2=0),
\end{array}
$$
when the true category belongs to block 1 and 2 respectively.  
\par\medskip
Thirdly, suppose an observation $z$ belongs to {\it several} categories $\cJ\subset\cN$, such as when $\cJ$ represents the properties associated with $z$. It is natural in this context to have a reward $R(\cI,\cJ)$ for a classified set $\cI\subset\cN$ of categories. The MAP classifier \re{RMAP}, for instance, corresponds a reward function $R(\cI,\cJ)=1(\cI=\cJ)$. If we allow for some misclassified categories and all categories belong to one homogeneous block, a natural extension of \re{Rg} is a function $R(\cI,\cJ)=1(\cJ\subset\cI) - g(|\cI|)$ where the first term gives a unit reward if all true categories are included in the classifier, whereas the second term $g(|\cI|)$ penalizes the size $|\cI|$ of the classified set.

\section*{Acknowledgements}
The authors would like to thank Vilhelm Niklasson at Stockholm University for suggesting the topic of conformal prediction, and the Falsterbo Bird Observatory for providing the data set.

%

\appendix

\section{Optimising $\tR_1$ and $\tR_2$} \label{app:2d}

In Figure \ref{fig:2d} the non-reward rate $1-R_{ab}^{\scr{cv}}(\tilde{R}) $ (cf.\ \eqref{Rbcv}) is plotted as a function of the two cost parameters $a$ and $b$ of the classifier $\hI_{(a,b)}$ for the two reward functions $\tR_1$ and $\tR_2$ of Table \ref{tab:reward}. The objective function is not smooth and thus it can be hard to optimize. However, we obtained good results with Nelder-Mead optimization \citep{nelder1965simplex}, as implemented in the \texttt{optim}-function in \textsf{R}, with a starting value of $(a,b)$ that corresponds to a small non-reward rate. 
\begin{figure}
\centering
\caption{The figure contains filled contour plots of the estimated non-reward rate $1-R_{ab}^{\scr{cv}}(\tilde{R}) $ (cf.\ \eqref{Rbcv}), as a function of the two cost parameters $a$ and $b$ of the classifier $\hI_{(a,b)}$. These estimated non-reward rates make use of weights $w_i^{(\scr{bird})}$ (cf.\ \re{weights2}), the reward function $\tR_1$ (top row) and $\tR_2$ (bottom row). The columns, from left to right, correspond to the priors $\piflat$ , $\piprop$ and $\pireal$ (cf. Section \ref{Sec:CVProc}). The levels of the contour plots are crudely drawn, but it can still seen that $\tR_1$ attains a low value for a large set of $(a,b)$,  whereas $\tR_2$ attains its lowest values over a small region where $a$ is close to 0.}
 \includegraphics[width=\textwidth]{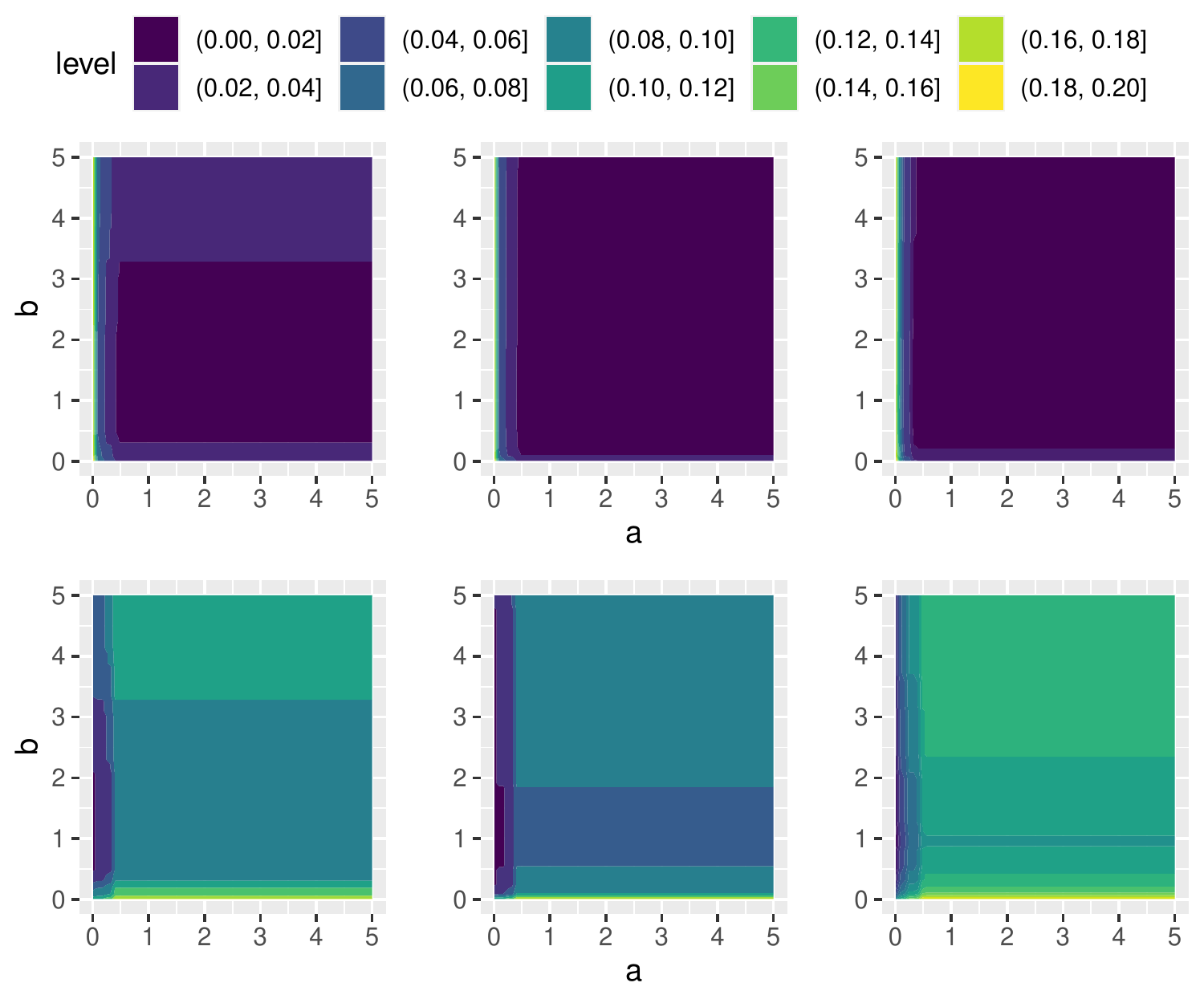}
 \label{fig:2d}
\end{figure}

\newpage
\bibliography{refs}

\end{document}